\newcommand{\Cal}[1]{\mathcal #1}
\def \cl {\operatorname{Cl}}
\def \N{\mathbb{N}}
\def \R{\mathbb{R}}
\def \bN{\mathbb{N}}
\def \Z{\mathbb{Z}}
\newcommand{\DSig}{\operatorname{D}_{\Sigma}}
\newcommand{\Int}{\operatorname{Int}}
\newcommand{\Dsig}{\mathrm{D}_{\Sigma}}
\def \cl {\mathrm{Cl}}
\newtheorem{Th}{Theorem}[section]
\newtheorem{Thm}[Th]{Theorem}
\newtheorem{Def}[Th]{Definition}
\newtheorem{Cor}[Th]{Corollary}
\newtheorem{Fact}[Th]{Fact}
\newtheorem{fact}[Th]{Fact}
\newtheorem{Prop}[Th]{Proposition}
\newtheorem*{claim}{Claim}
\newtheorem{Lem}[Th]{Lemma}
\newtheorem{Q}[Th]{Question}
\newtheorem{Qs}[Th]{Questions}
\newtheorem*{Lem*}{Lemma}
\newtheorem*{thmA}{Theorem A}
\newtheorem*{thmB}{Theorem B}
\begin{document}
\title{Fractals and the monadic second order theory of one successor}

\author{Philipp Hieronymi}
\address{Mathematical Institute\\ University of Bonn\\
Endenicher Allee 60\\ 53115 Bonn\\ Germany}
\email{hieronymi@math.uni-bonn.de}

\author{Erik Walsberg}
\address{Department of Mathematics\\ University of California, Irvine}
\email{ewalsber@uci.edu}
\urladdr{https://www.math.uci.edu/\textasciitilde ewalsber}


\maketitle

\begin{abstract}
We show that if $X$ is virtually any classical fractal subset of $\R^n$, then $(\R,<,+,X)$ interprets the monadic second-order theory of $(\N,+1)$.
This result is sharp in the sense that the standard model of the monadic second-order theory of $(\N,+1)$ is known to interpret $(\R,<,+,X)$ for various classical fractals $X$, including the middle-thirds Cantor set and the Sierpinski carpet.
Let $X \subseteq \R^n$ be closed and nonempty.
We show that if the $C^k$-smooth points of $X$ are not dense in $X$ for some $k \ge 1$, then $(\R,<,+,X)$ interprets the monadic second-order theory of $(\N,+1)$.
The same conclusion holds if the packing dimension of $X$ is strictly greater than the topological dimension of $X$ and $X$ has no affine points.
\end{abstract}

\section{Introduction}
\noindent
This paper is a contribution to a larger research enterprise (see \cite{FKMS,FHM,HM,H-TameCantor,HW-Monadic,FHW-Compact,BH-Cantor}) motivated by the following fundamental question:
\begin{center}
\emph{What is the logical/model-theoretic complexity generated by fractal\footnote{We do not work with a precise definition of what a fractal is. Rather we prove our results for sets that have properties characteristic for fractals. For an elegant exposition of this view of fractals, see the introduction in Falconer \cite{Falconer-book}.} objects?}
\end{center}
Here we will focus on fractal objects defined in first-order expansions of the ordered real additive group $(\R,<,+)$. Throughout this paper $\mathcal{R}$ is a first-order expansion of $(\R,<,+)$, and ``definable'' without modification means ``$\mathcal{R}$-definable, possibly with parameters from $\mathbb{R}$''. The main problem we want to address here is:
\begin{center}
\emph{If $\mathcal{R}$ defines a fractal object, what can be said about the logical complexity of $\mathcal{R}$?}
\end{center}
The first result in this direction is \cite[Theorem B] {HW-Monadic}, stating that whenever $\mathcal{R}$ defines a \textbf{Cantor set} (that is, a nonempty compact subset of $\R$ without interior or isolated points), then $\mathcal{R}$
defines an isomorphic copy of the two-sorted first-order structure $(\mathcal{P}(\bN),\bN,\in,+1)$. The latter structure is the standard model of the monadic second-order theory of $(\N,+1)$. Let $\mathcal{B}$ denote this structure. As pointed out in \cite{HW-Monadic}, while the theory of $\mathcal{B}$ is decidable by B\"uchi \cite{Buchi}, the structure does not enjoy any Shelah-style combinatorial tameness properties, such
as NIP or NTP2 (see e.g. Simon \cite{Simon-Book} for definitions). Thus every structure that defines an isomorphic copy of $\mathcal{B}$, can not satisfy these properties either, and for that reason has to be regarded as complicated or
wild in the sense of these combinatorial/model-theoretic tameness notions. In this paper, we extend such results to fractal subsets of $\R^n$. \newline

\noindent Let $X \subseteq \R^n$ be nonempty.
Given $k \geq 0$, a point $p$ on $X$ is \textbf{$\mathbf{C^k}$-smooth} if $U \cap X$ is a $C^k$-submanifold of $\mathbb{R}^n$ for some nonempty open neighbourhood $U$ of $p$.
A point $p$ on $X$ is \textbf{affine} if there is an open neighbourhood $U$ of $p$ such that $U \cap X = U \cap H$ for some affine subspace $H$.
We say that $\mathcal{R}$ is \textbf{field-type} if there is an open interval $I$, definable functions $\oplus, \otimes : I^2 \to I$ such that $(I,<,\oplus,\otimes)$ is isomorphic to $(\mathbb{R},<,+,\cdot)$.

\begin{thmA}\label{thm:smooth}
Let $X$ be a nonempty, closed and definable subset of $\mathbb{R}^n$.
If the $C^k$-smooth points of $X$ are not dense in $X$ for some $k \geq 0$, then $\mathcal{R}$ defines an isomorphic copy of $\mathcal{B}$.
If the affine points of $X$ are not dense in $X$, then $\mathcal{R}$ either defines an isomorphic copy of $\mathcal{B}$ or is field-type.
\end{thmA}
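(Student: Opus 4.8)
The plan is to prove the contrapositive of each implication, reducing everything to the problem of producing a definable Cantor set and then invoking \cite[Theorem B]{HW-Monadic}. So suppose $\mathscr{R}$ does \emph{not} define an isomorphic copy of $\mathscr{B}$; by \cite[Theorem B]{HW-Monadic} this means $\mathscr{R}$ defines no Cantor set. The first step is to feed this into the structural side of \cite{HW-Monadic}: the absence of a definable Cantor set forces the open core of $\mathscr{R}$ to be locally o-minimal. The elementary observation that makes this usable is that $X$ itself is definable in the open core, since $X$ is closed and hence $\R^n \setminus X$ is an open definable set. Thus in a neighbourhood of each of its points, $X$ is definable in an o-minimal expansion of $(\R,<,+)$, and we have moved the whole problem into o-minimal territory.

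For the first assertion I would then run $C^k$-cell decomposition in the o-minimal open core. Fix $p \in X$ and an open box $V$ around $p$, and put $d = \dim(X \cap V)$. The standard o-minimal fact that the $d$-dimensional $C^k$-manifold points of a definable set are dense in it yields a point of $X \cap V$ near which $X$ agrees with a $C^k$-submanifold of $\R^n$, hence a $C^k$-smooth point of $X$ inside $V$. Since $p$ and $V$ were arbitrary, the $C^k$-smooth points of $X$ are dense, contradicting the hypothesis and proving the first statement.

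For the second assertion I would layer the Peterzil--Starchenko trichotomy on top of the same setup. Working in the o-minimal open core near a point, either it defines a real closed field on some open interval $I$ — in which case the resulting $\oplus, \otimes$ are definable in $\mathscr{R}$, the field is isomorphic to $(\R,<,+,\cdot)$, and $\mathscr{R}$ is of field-type, so we are done — or no such field is definable on any interval, so by the trichotomy (and since $+$ is already present) the open core is everywhere locally affine. In the linear case the relevant cell decomposition is piecewise affine, so the top-dimensional cells through which $X$ passes are open subsets of affine subspaces; their points are affine points of $X$, and the density argument of the previous paragraph shows these are dense in $X$, again contradicting the hypothesis.

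The main obstacle will be the structural input of the first step: extracting from ``no definable Cantor set'' the local o-minimality of the open core, which is where the weight of \cite{HW-Monadic} is carried and where one must rule out that a definable closed set with no smooth (respectively affine) points secretly encodes a discrete or Cantor-like set at some small scale. A secondary difficulty in the second statement is the local and parametric nature of the Peterzil--Starchenko trichotomy: one must check that ``defines a field on some interval'' genuinely matches the global notion of field-type, and that in the complementary linear case the piecewise-affine structure is uniform enough across $X$ to force density of affine points at \emph{every} point of $X$ rather than merely generically.
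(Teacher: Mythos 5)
There is a genuine gap, and it sits exactly where you flag ``the main obstacle'': the claim that the absence of a definable Cantor set forces the open core of $\mathscr{R}$ to be (locally) o-minimal. This is not a theorem of \cite{HW-Monadic} or of any other cited work, and it cannot be assumed: if it were true, it would immediately settle the paper's open Question~\ref{q:2}, since an uncountable nowhere dense closed subset of $\R$ cannot be definable in a structure whose open core is locally o-minimal (locally such a set is a finite union of points and intervals, hence countable), so every structure defining such a set would define a Cantor set and hence a copy of $\mathscr{B}$. The authors explicitly state they do not know the answer to that question, so your structural input is strictly stronger than the state of the art. Once that step fails, everything downstream --- the $C^k$-cell decomposition and the Peterzil--Starchenko trichotomy --- has no o-minimal structure to run in, and the argument collapses. (A secondary point: even granting local o-minimality of the open core, matching ``defines a field on an interval'' with the paper's global notion of field-type, and getting uniformity of the affine cells across all of $X$, would still need care, as you note.)

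The paper's actual proof deliberately avoids any appeal to o-minimality. It reduces to the case where $\mathscr{R}$ is type A (not type A already gives $\mathscr{B}$ by Theorem~\ref{thm:old}(1)) and does not define a Cantor set, and then works entirely with the theory of $\Dsig$ sets: the strong Baire category theorem, $\Dsig$-selection, Hausdorff continuity of definable families (Proposition~\ref{lem:hd}), and the dimension-projection result Proposition~\ref{prop:project}. The ``no Cantor set'' hypothesis is used only in one place (Lemma~\ref{lem:cantor1}), to show that a zero-dimensional $\Dsig$ set has dense isolated points; the role your cell decomposition plays is instead played by Theorem~\ref{thm:old}(3) and (4), which assert only that \emph{continuous definable functions} in type A structures are generically $C^k$ (respectively, generically locally affine when $\mathscr{R}$ is not of field-type) --- a far weaker statement than o-minimality of the open core, but one that suffices after one has used selection and the $\varepsilon$-isolation argument of Lemma~\ref{lem:smoothpoint1} to exhibit a piece of $X$ as the graph of such a function. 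If you want to salvage your outline, you would need to replace the open-core step by this $\Dsig$ machinery.
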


\noindent When $\mathcal{R}$ is o-minimal\!
\footnote{Recall that $\mathcal{R}$ is \textbf{o-minimal} if every nonempty definable subset of $\mathbb{R}$ is a finite union of open intervals and singletons, and that an o-minimal structure cannot define an isomorphic copy of $(\N,+1)$ by \cite[Remark 2.14]{Lou}.}, the first statement of Theorem A can be deduced from the o-minimal cell decomposition theorem and a theorem of Laskowski and Steinhorn \cite[Theorem 3.2] {LasStein}, stating that definable functions in such expansions are $C^k$ outside a definable lower-dimensional set. The second statement of Theorem A follows in the o-minimal setting from work by Marker, Peterzil and Pillay \cite{MPP}, who essentially show that an o-minimal expansion of $(\R,<,+)$ that defines a nowhere locally affine set, is field-type. While not necessary for the proof of Theorem A, we will show in Section \ref{section:ckdef} that for every expansion $\mathcal{R}$ the $C^k$-smooth points of a definable set are definable again.\newline

\noindent In his original work, Mandelbrot called a set $X$ a fractal if the topological dimension of $X$ is strictly less than the Hausdorff dimension of $X$. Topological dimension here refers to either small inductive dimension, large inductive dimension, or Lebesgue covering dimension. On subsets of $\R^n$ these three dimensions coincide (see Engelking \cite{Engelking} for details and definitions).
Given Mandelbrot's definition, it is natural to explore situations in which metric dimensions and topological dimensions do not coincide.
Here, we will discuss three important and well-known metric dimensions: Hausdorff, packing, and Assouad dimension.
We refer the reader to Heinonen~\cite{Juha}, Mattila~\cite{Mattila}, or Fraser~\cite{Fraser2020} for the definitions of these dimensions and the basic facts we apply.
It is well-known that
$$ \dim X \leq \dim_{\operatorname{Hausdorff}} X \leq \dim_{\operatorname{Packing}} X \leq \dim_{\operatorname{Assouad}} X$$
for all nonempty subsets $X$ of $\R^n$.
Here and below $\dim X$ is the topological dimension of $X$.
Commonly used metric dimensions are bounded below by the  topological dimension and above by the Assouad dimension.\newline

\noindent In this paper, we will obtain the following theorem as a corollary of Theorem A.

\begin{thmB} Let $X$ be a nonempty, closed and definable subset of $\mathbb{R}^n$. Then
\begin{itemize}
\item[(i)] If $X$ is bounded, $\dim X < \dim_{\operatorname{Assouad}} X$, and the affine points of $X$ are not dense in $X$, then $\mathcal{R}$ defines an isomorphic copy of $\mathcal{B}$.
\item[(ii)] If $X$ does not have affine points and $\dim X < \dim_{\operatorname{Packing}} X$, then $\mathcal{R}$ defines an isomorphic copy of $\mathcal{B}$.
\end{itemize}
\end{thmB}

\noindent Statement (i) in Theorem B does not generalize to unbounded sets, and in statement (ii) packing dimension can not be replaced by Assouad dimension.
The structure $(\R,<,+,\sin)$ is known to be NIP, and hence does not define an isomorphic copy of $\mathcal{B}$.
However, the reader can check that the $(\R,<,+,\sin)$-definable set
\[
\{ (x, t + \sin(x) ) : t \in \pi\mathbb{Z}, x \in \R \}
\]
has Assouad dimension two, topological dimension one, and no affine points.
We discuss $(\R,<,+,\sin)$ and related structures in Section~\ref{section:top-assouad} below.
\newline

\noindent Theorem B is not the first result that establishes model-theoretic wildness in expansions of the real line in which metric dimensions do not coincide with the topological dimension. For $a\in \R$, let $\lambda_a : \R \to \R$ map $x$ to $ax$. We denote by $\mathbb{R}_{\mathrm{Vec}}$ the ordered vector space $(\R,<,+,(\lambda_a)_{a\in \R})$.

\begin{fact}[{Fornasiero, Hieronymi and Walsberg \cite[Theorem A]{FHW-Compact}}]\label{thm:equal-dim}
Suppose $\mathcal{R}$ expands $\mathbb{R}_{\mathrm{Vec}}$. Let $X \subseteq \mathbb{R}^n$ be nonempty, closed and definable.
If the topological dimension of $X$ is strictly less than the Hausdorff dimension of $X$, then $\mathcal{R}$ defines every bounded Borel subset of every $\mathbb{R}^n$.
\end{fact}

\noindent Observe that whenever $\mathcal{R}$ defines every bounded Borel subset of every $\mathbb{R}^n$, it also defines an isomorphic copy of $\mathcal{B}$. Thus Theorems A and B can be seen as an analogue of Fact \ref{thm:equal-dim} when $\mathcal{R}$ does not necessarily expand $\mathbb{R}_{\mathrm{Vec}}$. Note that there exists a compact subset $X$ of $\R$ with topological dimension zero and positive packing dimension such that $(\R_{\text{Vec}},X)$ does not define all bounded Borel sets (see \cite[Section 7.2]{FHW-Compact}).
There are even stronger results for expansions of the real field.

\begin{fact}[{Hieronymi, Miller \cite[Theorem A]{HM}}]\label{fact:equal-dim1}
Suppose $\mathcal{R}$ expands $(\R,+,\cdot)$ and $X \subseteq \mathbb{R}^n$ is nonempty, closed and definable.
If the topological dimension of $X$ is strictly less than the Assouad dimension of $X$, then $\mathcal{R}$ defines every Borel subset of every $\mathbb{R}^n$.
\end{fact}


\noindent Theorems A and B arguably show that if $\mathcal{R}$ defines an object that can be called a fractal, then $\mathcal{R}$ defines an isomorphic copy of $\mathcal{B}$. Hence any tameness condition that is preserved by interpretability (such as NIP and NTP2), fails for such $\mathcal{R}$ whenever it fails for $\mathcal{B}$. It is natural to wonder whether such expansions fail every model-theoretic tameness condition that fails for $\mathcal{B}$. It is known (see the next paragraph) that there exist expansions of $(\R,<,+)$ that define fractal subsets of $\R^n$ and are bi-interpretable with $\mathcal{B}$. It can be argued that in order to be consider \emph{model-theoretic}, a tameness condition should  be preserved unter bi-interpretability\footnote{Indeed, according to Pillay \cite{BKPS} ``the business of `pure' model theory becomes the classification of first order theories up to bi-interpretability.''}. Thus every such tameness condition satisfied by \emph{all} expansions of $(\R,<,+)$ that define fractal objects in sense of Theorems A and B, has to be satisfied by $\mathcal{B}$ as well. Hence our theorems are optimal in this sense.\newline

\noindent  We describe an example of an expansion $\mathcal{R}$ that is bi-interpretable with $\mathcal{B}$ and defines fractal subsets of $\R^n$. Fix a natural number $r \geq 2$.
Let $V_r(x,u,d)$ be the ternary predicate on $\R$ that holds whenever $u = r^n$ for $n \in \mathbb{N}_{\geq 1}$ and there is a base $r$ representation of $x$ with $n$th digit $d$. Let $\sigma_r : r^{\N} \to r^{-\N}$ be the function that maps $r^{n}$ to $r^{-n}$ for all $n\in \N$. We let $\mathcal{R}_r$ be $(\R,<,+,V_r,\sigma_r)$.
It is easy to see that $\mathcal{R}_3$ defines the middle-thirds Cantor set, the Sierpinski triangle, and the Menger carpet. Adjusting the work in Boigelot, Rassart and Wolper \cite{BRW} to account for $\sigma_r$, one can easily show that $\mathcal{B}$ and $\mathcal{R}_r$ are bi-interpretable.\newline


%

\noindent
\noindent We finish with a few open questions.
We do not know if Theorem A remains true when ``$C^k$" is replaced with ``$C^\infty$".
Note that by Rolin, Speissegger and Wilkie \cite{RSW}, there is an o-minimal expansion of $(\R,+,\cdot)$ that defines a function $f : \R \to \R$ that is not $C^\infty$ on a dense, definable and open subset of $\R$.
However, this function is still $C^\infty$ on a dense open subset of $\R$.
These considerations lead to the following question:

\begin{Q}
Is there an o-minimal expansion of $(\R,+,\cdot)$ that defines a function $f : \R \to \R$ that is not $C^\infty$ on a dense open subset of $\R$?
\end{Q}

\noindent The author of \cite{LGal} indicated to us that the it might be possible to adapt ideas from that paper to construct such an expansion.

\begin{Qs}\label{q:1}
Let $X$ be a nonempty, closed and definable subset of $\R^n$.
If the topological dimension of $X$ is strictly less than the Hausdorff dimension of $X$, then must $\mathcal{R}$ define an isomorphic copy of $\mathcal{B}$?
\end{Qs}

\noindent  Observe that Theorem B gives an affirmative answer to Question~\ref{q:1} under the additional assumption that $X$ does not have affine points. We do not even know the answer to the following weaker question.

\begin{Qs}\label{q:2}
If $\mathcal{R}$ defines an uncountable nowhere dense subset of $\R$, must $\mathcal{R}$ define an isomorphic copy of $\mathcal{B}$?
Weaker: if $\mathcal{R}$ defines an uncountable nowhere dense subset of $\mathbb{R}$, must $\mathcal{R}$ have $\mathrm{IP}$?
\end{Qs}

\subsection*{Acknowledgments} The first author was partially supported by NSF grant DMS-1654725 and the \emph{Hausdorff Center for Mathematics} at the University of Bonn.  The authors thank Chris Miller for helpful feedback on an earlier version of this paper.

\section{Conventions, notation and background}
\subsection{Conventions and notations.} Throughout $m,n$ are natural numbers, $i,j,k,l$ are integers and $s,t,\delta,\varepsilon$ are real numbers.
Throughout ``dimension" is topological dimension unless stated otherwise.\newline

\noindent Let $X$ be a subset of $\R^n$.
Let $\dim X$ be the  topological dimension of $X$, let $\dim_{\operatorname{Hausdorff}} X$ be the Hausdorff dimension of $X$, let $\dim_{\operatorname{Packing}} X$ be the packing dimension of $X$ and let  $\dim_{\operatorname{Assouad}} X$ be the Assouad dimension of $X$. Furthermore, $\cl(X)$ and $\Int(X)$ are the closure and interior of $X$, and $\text{Bd}(X) := \cl(X) \setminus \Int(X)$ is the boundary of $X$, all with respect to the usual order topology.\newline
For $A \subseteq \mathbb{R}^{m + n}$ and $x \in \mathbb{R}^m$, set
\[
A_x := \{ y \in \mathbb{R}^n : (x,y) \in A \} .
\]
Let $f: X \to Y$ be a function. We write $\Gamma(f)$ for the graph of a function $f$, and $f|_Z$ for the restriction of $f$ to a subset $Z\subseteq X$.\newline

\noindent  A family $( A_t )_{t > 0}$ of sets is \textbf{increasing} if $s < t$ implies $A_s \subseteq A_t$, and \textbf{decreasing} if $s < t$ implies $A_t \subseteq A_s$.\newline

\noindent Throughout $\|\cdot\|$ is the $\ell_\infty$-norm and an ``open ball" is an open $\ell_\infty$-ball. For $x\in \R^n$, we denote by $B_{\varepsilon}(x)$ the open ball of radius $\varepsilon$ around $x$. We use the $\ell_\infty$-norm as opposed to the $\ell_2$-norm, since $\ell_\infty$ is $(\R,<,+)$-definable.
 All dimensions of interest are bi-lipschitz invariants and therefore unaffected by our choice of norm.

\subsection{Background}\label{section:background}
We review definitions and results from the theory of first-order expansions of $(\R,<,+)$.
An \textbf{$\omega$-orderable set} is a definable set that is either finite or admits a definable order of order-type $\omega$.
One should think of ``$\omega$-orderable sets" as ``definably countable sets".
A \textbf{dense $\omega$-order} is an $\omega$-orderable subset of $\mathbb{R}$ that is dense in some nonempty open interval.
 We say $\mathcal{R}$ is \textbf{type A} if it does not admit a dense $\omega$-order, \textbf{type C} if it defines every bounded Borel subset of every $\mathbb{R}^n$, and \textbf{type B} if it is neither type A nor type C.
 It is easy to see that these three classes of structures are mutually exclusive. We refer the reader to \cite{HW-continuous} for a more detailed discussion of this trichotomy and its relevance. For the reader's convenience, we collect several of the main results from \cite{HW-Monadic, HW-continuous}.
 
\begin{fact}\label{thm:old}
Let $k\geq 1$, let $U \subseteq \R^m$ be a definable open set, and let $f : U \to \mathbb{R}^n$ be definable and continuous.
\begin{enumerate}
\item If $\mathcal{R}$ is not type A, then $\mathcal{R}$ defines an isomorphic copy of $\mathcal{B}$ (see \cite[Theorem A]{HW-Monadic}).
\item If $\mathcal{R}$ is type B, then $\mathcal{R}$ is not field-type (see \cite[Theorem C]{HW-continuous}).
\item If $\mathcal{R}$ is type A, then there is a dense, open and definable subset $V$ of $U$ on which $f$ is $C^k$ (see \cite[Theorem B]{HW-continuous}).
\item If $\mathcal{R}$ is type A and not field-type, then there is a dense, open and definable subset $V$ of $U$ on which $f$ is locally affine (see \cite[Theorem A]{HW-continuous}).
\item If $\mathcal{R}$ is not field-type, $U$ is connected and $f$ is $C^2$, then $f$ is affine (see \cite[Theorem 8.1]{HW-continuous}). 
\end{enumerate}
\end{fact}

\noindent A set $X\subseteq \R^n$ is $\textbf{D}_{\mathbf{\Sigma}}$ if $X = \bigcup_{s,t > 0} X_{s,t}$ for a definable family $( X_{s,t} )_{s,t > 0}$ of compact subsets of $\mathbb{R}^n$ such that $X_{s,t} \subseteq X_{s,t'}$ when $t \leq t'$ and $X_{s',t} \subseteq X_{s,t}$ when $s \leq s'$.
We say that such a family \textbf{witnesses} that $X$ is $\Dsig$.
Note that a $\DSig$ set is definable and that every $\DSig$ set is $F_\sigma$.

\begin{fact}[{Dolich, Miller and Steinhorn \cite[1.10]{DMS1}}]\label{prop:image}
Open and closed definable sets are $\Dsig$, a finite union or finite intersection of $\Dsig$ sets is $\Dsig$, and the image of a $\Dsig$ set under a continuous definable function is $\Dsig$.
\end{fact}

\noindent A key result about $\DSig$ sets in type A structures is the following Strong Baire Category Theorem, or \textbf{SBCT}.

\begin{fact}[{\cite[Theorem 4.1]{FHW-Compact}}]\label{SBCT}
Suppose $\mathcal{R}$ is type A. Let $X\subseteq \R^n$ be a $\Dsig$ set witnessed by the definable family $(X_{s,t})_{s,t > 0}$.
Then
\begin{enumerate}
    \item $X$ either has interior or is nowhere dense,
    \item  if $X$ is dense in a definable open set $U$, then the interior of $X$ is dense in $U$,
    \item if $X$ has interior, then $X_{s,t}$ has interior for some $s,t > 0$,
    \item  if $(X_t)_{t > 0}$ is an increasing family of $\DSig$ sets and $\bigcup_{t > 0} X_t$ has interior, then $X_t$ has interior for some $t > 0$.
\end{enumerate}
\end{fact}

\noindent The latter two claims follow by applying the Baire Category Theorem to the first claim.
The following corollary follows from SBCT and the fact that the closure and the interior of a $\DSig$ set are also $\DSig$.

\begin{Cor}\label{cor:bd} Suppose $\mathcal{R}$ is type $A$.
Let $X\subseteq \R^n$ be $\DSig$. Then $\mathrm{Bd}(X)$ is nowhere dense.
\end{Cor}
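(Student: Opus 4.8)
The plan is to apply the strong Baire category theorem (Proposition~\ref{SBCT}) directly to $\mathrm{Bd}(X)$, so the first task is to check that $\mathrm{Bd}(X)$ is itself a $\DSig$ set, after which the only remaining work is to rule out the possibility that it has interior. Throughout I assume $\mathscr{R}$ is type A, as required by SBCT.

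First I would verify that $\mathrm{Bd}(X)$ is $\DSig$. Since $X$ is $\DSig$ it is definable, hence $\cl(X)$ is a closed definable set and $\Int(X)$ is an open definable set; by Proposition~\ref{prop:image} both $\cl(X)$ and the closed set $\R^n \setminus \Int(X)$ are $\DSig$. As $\mathrm{Bd}(X) = \cl(X) \setminus \Int(X) = \cl(X) \cap (\R^n \setminus \Int(X))$ is a finite intersection of $\DSig$ sets, Proposition~\ref{prop:image} gives that $\mathrm{Bd}(X)$ is $\DSig$. This is exactly the content of the remark that the closure and interior of a $\DSig$ set are again $\DSig$.

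By the first assertion of SBCT, $\mathrm{Bd}(X)$ either has interior or is nowhere dense, so it suffices to derive a contradiction from the assumption that $\mathrm{Bd}(X)$ has interior. Suppose then that some open ball $B$ is contained in $\mathrm{Bd}(X) = \cl(X) \setminus \Int(X)$. From $B \subseteq \cl(X)$ we get that $X$ is dense in the open set $B$, so the final clause of SBCT applies to the $\DSig$ set $X$ and yields that $\Int(X)$ is dense in $B$; in particular $\Int(X) \cap B \neq \emptyset$. This contradicts $B \cap \Int(X) = \emptyset$, which holds because $B \subseteq \R^n \setminus \Int(X)$. Hence $\mathrm{Bd}(X)$ has empty interior and is therefore nowhere dense.

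I expect the only real subtlety to be the bookkeeping in the first step: one must confirm that $\mathrm{Bd}(X)$ falls within the class to which SBCT applies, which hinges on the stability of $\DSig$ under passing to closures, interiors, complements of open sets, and finite intersections. Once that is in place, the dichotomy together with the density clause of SBCT does all the remaining work, so there is no genuinely hard step.
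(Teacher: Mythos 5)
Your proof is correct and follows the same route the paper intends: the paper's own justification is precisely the one-line remark that $\mathrm{Bd}(X)$ is $\DSig$ (via closures, interiors, and finite intersections of $\DSig$ sets) and that SBCT then applies, with the interior case ruled out exactly as you do by the density clause of SBCT. Your explicit note that the type A hypothesis is in force is appropriate, since that assumption is implicit in the paper's use of SBCT and is genuinely needed (the boundary of an arbitrary $F_\sigma$ set need not be nowhere dense).
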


\noindent We also need the following $\mathbf{\Dsig}$-\textbf{selection} result in type A structures.

\begin{fact}[{\cite[Proposition 5.5]{FHW-Compact}}]\label{select}
Suppose $\mathcal{R}$ is type A. Let $X \subseteq \mathbb{R}^{m+n}$ be $\Dsig$, and let $U \subseteq \mathbb{R}^m$ be a nonempty, open set contained in the coordinate projection of $X$ onto $\mathbb{R}^m$.
Then there is a nonempty, open and definable set $V \subseteq U$ and a continuous and definable function $f : V \to \R^n$ such that the graph $\Gamma(f)$ is contained in $X$.
\end{fact}

\noindent We collected two special cases of a general result on additivity of dimension~\cite[Theorem E]{FHW-Compact}.

\begin{fact}\label{prop:project}
Suppose $\mathcal{R}$ is type A. Let $d \in \N$ and let $A \subseteq \mathbb{R}^n$ be a $\Dsig$ set such that $\dim A = d$ for $d\in \N$ with $1\leq d\leq n-1$. Then there is a coordinate projection $\pi : \mathbb{R}^n \to \mathbb{R}^d$ and a nonempty open $U \subseteq \mathbb{R}^d$ contained in $\pi(A)$ such that for all $x\in U$
\[
\dim (\pi^{-1}(\{x\}))=0.
\]
\end{fact}

\begin{fact}\label{prop:project1}
Suppose $\mathcal{R}$ is type A. Let $X \subseteq \R^n$ be $\DSig$, and let $f : X \to \R^m$ be a continuous and definable function.
Then $\dim f(X) \leq \dim X$.
\end{fact}

\noindent One corollary of Fact~\ref{prop:project1} is that type A expansions cannot define space-filling curves \cite[Theorem E]{FHW-Compact}. For our purposes a \textbf{Cantor set} is a nonempty compact nowhere dense subset of $\R$ without isolated points.

\begin{fact}[{\cite[Theorem B]{HW-Monadic}}]\label{prop:cantor0}
If $\mathcal{R}$ defines a Cantor set, then $\mathcal{R} $ defines an isomorphic copy of $\mathcal{B}$.
\end{fact}

\noindent If $X \subseteq \R$, then $p \in X$ is $C^k$-smooth (for any $k \geq 0$) if and only if $p$ is either isolated in $X$ or lies in the interior of $X$.
Therefore Fact~\ref{prop:cantor0} yields Theorem A for definable subsets of $\R$.

\section{Hausdorff continuity of definable families}
\noindent
Throughout this section $\mathcal{R}$ is assumed to be type A
and $U$ is assumed to be a fixed, nonempty, open and definable subset of $\mathbb{R}^m$. The goal of this section is to show that definable families of $\DSig$ subsets of $\R^n$ indexed by $U$ are Hausdorff continuous on a dense, open subset of $U$. We make this statement precise in Proposition \ref{lem:hd}. Let $\pi:\R^m \times \R^n \to \R^m$  be the coordinate projection onto the first $m$ coordinates. \newline

\noindent We first recall some useful notions from metric geometry.
Let $f : (X,d_X) \to (Y,d_Y)$ be a function between metric spaces.
 The \textbf{oscillation} of $f$ at $x \in X$ is the supremum of all $\delta \geq 0$ such that for every $\varepsilon > 0$ there are $y,z \in X$ such that $d_X(x,y) <\varepsilon$, $ d_X(x,z) < \varepsilon$ and $d_Y\left(f(y), f(z)\right) > \delta$.
 Recall that $f$ is continuous at $x$ if and only if the oscillation of $f$ at $x$ is zero. Furthermore, the set of $x \in X$ at which the oscillation of $f$ is at least $\varepsilon$ is closed for every $\varepsilon > 0$.

\begin{Def}
For $\delta \geq 0$ and $X\subseteq \R^n$, we define
\[
B_{\delta}(X) := \bigcup_{x\in X} \{ y \in \R^n \ : \ \| x - y \| < \delta\}.
\]
The \textbf{Hausdorff distance} $d_{\mathscr{H}}(X,Y)$ between two nonempty subsets $X$ and $Y$ of $\R^n$ is
\[
d_{\mathscr{H}}(X,Y) := \inf \{ \delta \geq 0 \ : \ X \subseteq B_{\delta}(Y), Y \subseteq B_{\delta}(X)\};
\]
that is, the infimum of the set of all $\delta > 0$ such that for every $x \in X$ there is a $y \in Y$ such that $\| x - y \| < \delta$ and for every $y \in Y$ there is an $x \in X$ such that $\| x - y \| < \delta$.
\end{Def}
 
\noindent The Hausdorff distance between a bounded subset of $\R^n$ and its closure is zero.
 The Hausdorff distance restricts to a separable complete metric on the collection $\mathcal{C}$ of all nonempty compact subsets of $\mathbb{R}^m$. The following observation is an immediate consequence of the definition of $d_{\mathscr{H}}$.

 \begin{Lem}\label{lem:hd0}
 Let $W$ be a bounded, open subset of $\R^n$, let $X,Y\subseteq W$ and let $\mathcal{D}$ be a collection of open balls of diameter at least $\varepsilon$ covering $W$  such that
 \[
  \{ B \in \mathcal{D} : B \cap X \neq \emptyset \} = \{ B \in \mathcal{D} : B \cap Y \neq \emptyset \}.
  \]
 Then $d_{\mathscr{H}}(X,Y) \leq \varepsilon$.
 \end{Lem}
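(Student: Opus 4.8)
The plan is to unwind the definition of the Hausdorff distance directly, since the hypothesis already matches its two-sided structure. Both $d_{\mathscr{H}}(\cdot,\cdot)$ and the hypothesized equality of ball-collections are symmetric in $A$ and $A'$, so it suffices to produce, for each $a \in A$, a point $a' \in A'$ with $\|a - a'\| \leq \varepsilon$; interchanging the roles of $A$ and $A'$ then yields a point of $A$ near each point of $A'$, and together these two estimates bound $d_{\mathscr{H}}(A,A')$ by $\varepsilon$.

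To produce such an $a'$, I would fix $a \in A$. Since $A \subseteq W$ and $\mathscr{D}$ covers $W$, there is a ball $B \in \mathscr{D}$ with $a \in B$, so $a \in B \cap A$ and hence $B$ lies in the collection $\{ B \in \mathscr{D} : B \cap A \neq \emptyset \}$. By the hypothesized equality of the two collections, $B$ also lies in $\{ B \in \mathscr{D} : B \cap A' \neq \emptyset \}$, so $B \cap A' \neq \emptyset$ and I may choose $a' \in B \cap A'$. As $a$ and $a'$ lie in the common ball $B$, whose diameter is at most $\varepsilon$, we obtain $\|a - a'\| \leq \varepsilon$ (in fact a strict inequality, since $B$ is open), which is the desired estimate.

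There is essentially no obstacle here: the argument is a single covering/pigeonhole step, exactly as the remark preceding the lemma promises (it follows directly from the definition of $d_{\mathscr{H}}$). The only two points deserving a word of care are the symmetry reduction, which lets one verify a single inclusion in place of both halves of the Hausdorff condition, and the bookkeeping of the infimum in the definition of $d_{\mathscr{H}}$: since every witnessing distance is at most $\varepsilon$, the value $\delta = \varepsilon$ already satisfies the defining condition, and hence $d_{\mathscr{H}}(A,A') \leq \varepsilon$. (Note that the diameter bound on the balls of $\mathscr{D}$ must be an upper bound of $\varepsilon$ for this conclusion to hold, as a single oversized ball meeting both $A$ and $A'$ shows.)
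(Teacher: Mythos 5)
Your proof is correct and is exactly the direct unwinding of the definition that the paper intends; the paper supplies no proof at all, remarking only that the lemma ``follows directly from the definition of $d_{\mathscr{H}}$.'' You are also right that the stated hypothesis ``diameter at least $\varepsilon$'' is a slip for ``diameter at most $\varepsilon$'' --- the application in Proposition~\ref{lem:hd} uses a cover by balls of diameter $\leq \varepsilon$, confirming your reading.
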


\begin{Def} Let $Z \subseteq \mathbb{R}^m$ and let $\Cal A = (A_x)_{x \in Z}$ be a family of nonempty bounded subsets of $\mathbb{R}^n$. Let $M_{\Cal A}: Z \to \mathcal{C}$ map $x\in Z$ to  $\cl(A_x)$.
We say that $\Cal A$ is \textbf{HD-continuous} if $M_{\Cal A}$ is continuous.    
\end{Def}

\noindent For $\varepsilon >0$, let $\mathcal{O}_\varepsilon(\Cal A)$ be the set of points in $Z$ at which $M_{\Cal A}$ has oscillation at least $\varepsilon$. Let $\mathcal{O}(\Cal A)$ be the set of points at which $M_{\Cal A}$ has positive oscillation. Observe that the complement of $\mathcal{O}(\Cal A)$ is the set of points at which $M_{\Cal A}$ is continuous, and that each $\mathcal{O}_\varepsilon(\Cal A)$ is closed in $Z$.\newline

\noindent We say that $A \subseteq \R^{m +n }$ is \textbf{vertically bounded} if there is an open ball $W\subseteq\R^n$ such that $A_x \subseteq W$ for all $x \in \R^m$.


\begin{Prop}\label{lem:hd}
Let $A \subseteq \R^{m} \times \mathbb{R}^{n}$ be $\Dsig$ and vertically bounded such that $U\subseteq \pi(A)$.
Then there is a dense definable open subset $V$ of $U$ such that $(A_x)_{x \in V}$ is $\mathrm{HD}$-continuous.
\end{Prop}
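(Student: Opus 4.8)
The plan is to localize the set where $M_{\Cal A}$ fails to be continuous, show that it is topologically negligible, and then use the type A hypothesis to pass from a dense $G_\delta$ of continuity points to a dense \emph{open} one. Concretely, I would first establish the key local estimate that for every $\varepsilon > 0$ the closed definable set $\mathscr{O}_\varepsilon(\Cal A)$ is nowhere dense. Granting this, the discontinuity set of $M_{\Cal A}$ is $Z = \bigcup_{k \geq 1} \mathscr{O}_{1/k}(\Cal A)$, and the proposition follows once I know that $Z$ is both definable and nowhere dense, for then $U' := U \setminus \cl(Z)$ is a dense definable open subset of $U$ at each point of which $M_{\Cal A}$ has zero oscillation, hence is continuous.

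The heart of the argument is the nowhere density of each $\mathscr{O}_\varepsilon(\Cal A)$, and this is exactly where vertical boundedness is essential. I would fix an open ball $W \subseteq \R^n$ containing every fiber $A_x$ and, using compactness of $\cl(W)$, fix a \emph{finite} cover $\mathscr{D} = \{B_1, \dots, B_N\}$ of $\cl(W)$ by open balls of diameter $\varepsilon/2$; the decisive point is that this single cover works simultaneously for all $x \in U$. For each $i$ the set $E_i := \{x \in U : A_x \cap B_i \neq \emptyset\}$ is the image of the $\Dsig$ set $A \cap (U \times B_i)$ under a coordinate projection, hence is $\Dsig$ by Proposition~\ref{prop:image}, so by Corollary~\ref{cor:bd} its boundary $\boundr{E_i}$ is nowhere dense. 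Suppose toward a contradiction that $\mathscr{O}_\varepsilon(\Cal A)$ contained an open box $V$. Deleting the closed nowhere dense set $\bigcup_{i=1}^N \cl(\boundr{E_i})$ from $V$ leaves a nonempty open set, near any point of which each membership relation ``$x \in E_i$'' is locally constant; intersecting the finitely many resulting neighbourhoods yields an open $V' \subseteq V$ on which the pattern $\{i : A_x \cap B_i \neq \emptyset\}$ is independent of $x$. By Lemma~\ref{lem:hd0} this forces $d_{\mathscr{H}}(\cl(A_x), \cl(A_{x'})) \leq \varepsilon/2$ for all $x, x' \in V'$, so the oscillation of $M_{\Cal A}$ is at most $\varepsilon/2$ throughout $V'$, contradicting $V' \subseteq \mathscr{O}_\varepsilon(\Cal A)$. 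Hence $\mathscr{O}_\varepsilon(\Cal A)$ has empty interior, and being closed it is nowhere dense.

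To finish I would upgrade the resulting dense $G_\delta$ of continuity points to a dense \emph{open} set, and this is the step that genuinely invokes the type A hypothesis through SBCT; I regard it, together with the covering reduction above, as the main obstacle, since a purely topological argument delivers only a dense $G_\delta$ (the discontinuity set is a countable union of nowhere dense sets and could a priori be dense). Since $d_{\mathscr{H}}$ applied to the definable family is a definable function of the parameters, each $\mathscr{O}_\varepsilon(\Cal A)$ is definable, and I would check that $Z$ is $\Dsig$ by exhibiting the witnessing family of compact sets $Z_{s,t} := \mathscr{O}_{1/t}(\Cal A) \cap \{x : \|x\| \leq 1/s\} \cap \{x : \operatorname{dist}(x, \R^m \setminus U) \geq s\}$. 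Proposition~\ref{SBCT} then gives that $Z$ is either nowhere dense or has interior; the latter is impossible, because if $Z$ contained an open box it would be expressed as a countable union of the nowhere dense closed sets $\mathscr{O}_{1/k}(\Cal A)$ covering a Baire space. Therefore $Z$ is nowhere dense, $U' := U \setminus \cl(Z)$ is dense, definable, and open, and $M_{\Cal A}$ is continuous at every point of $U'$, which is precisely the assertion.
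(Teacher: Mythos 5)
Your proof is correct and follows essentially the same route as the paper's: a finite cover of the bounding ball $W$ by small balls, the $\Dsig$ projections $E_B$ whose boundaries are nowhere dense by Corollary~\ref{cor:bd}, Lemma~\ref{lem:hd0} to control the oscillation away from those boundaries, and SBCT to upgrade the meager discontinuity set to a nowhere dense one. Your use of diameter $\varepsilon/2$ (rather than $\varepsilon$) and your explicit compact witnessing family for $\mathscr{O}(\Cal A)$ being $\Dsig$ are minor tidy-ups of details the paper leaves implicit.
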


\begin{proof}
Let $\Cal A$ be the definable family $(A_x)_{x \in U}$.
We show that $\mathcal{O}(\Cal A)$ is nowhere dense and take $V$ to be the interior of the complement of $\mathcal{O}(\Cal A)$ in $U$. Note that then $V$ is definable, since $\mathcal{O}(\Cal A)$ is. It suffices to establish that every point in $U$ has a neighbourhood $U'$ such that $\mathcal{O}(\mathcal{A'})$ is nowhere dense in $U'$ where $\Cal A'$ is the restricted family $(A_x)_{x \in U'}$. Therefore, we may assume without loss of generality that $U$ is an open ball and in particular, connected. 
Since $(\mathcal{O}_\varepsilon(\mathcal{A}) )_{\varepsilon > 0}$ witnesses that $\mathcal{O}(\Cal A)$ is $\Dsig$, it suffices to show that $\mathcal{O}_\varepsilon(\Cal A)$ is nowhere dense for all $\varepsilon > 0$, and then apply SBCT to obtain nowhere density of $\Cal O(\Cal A)$.\newline

\noindent  Fix $\varepsilon > 0$.
 Let $W$ be an open ball in $\R^n$ such that $A_x \subseteq W$ for all $x \in U$.
 Let $\mathcal{D}$ be a finite collection of closed balls of diameter $\leq \varepsilon$ covering $W$. For each $B\in \mathcal{D}$,
 set
\[
E_B := \pi([\mathbb{R}^m \times B] \cap A);
\]
 that is, $E_B$ is the set of $x \in U$ such that $A_x$ intersects $B$. By
Fact~\ref{prop:image} each $E_B$ is $\DSig$, and hence $\text{Bd}(E_B)$ is nowhere dense in $U$ for each $B \in \mathcal{D}$ by Corollary~\ref{cor:bd}.
Recall that the boundary of a subset of a topological space is always closed.
Therefore
$$Y := \bigcap_{B \in \mathcal{D}} U \setminus \text{Bd}(E_B) $$
is dense and open in $U$.
We show that $Y$ is a subset of the complement of $\mathcal{O}_\varepsilon(\mathcal{A})$. The claim that $\mathcal{O}_\varepsilon(\mathcal{A})$ is nowhere dense, follows.\newline

\noindent Let $p \in Y$. We prove that the oscillation of $M_{\Cal A}$ at $p$ is at most $\varepsilon$.
Let $R$ be an open ball with center $p$ contained in $Y$. Observe that for all $B \in \mathcal{D}$ 
\[
R \cap \text{Bd}(E_B) = \emptyset.
\] 
Since $R$ and $U$ are both connected, we have that for every $B\in \mathcal{D}$, either
\[
R\subseteq E_B \text{ or } R\cap E_B= \emptyset.
\]
Let $q \in R$. For every $B \in \mathcal{D}$, we have that $q \in E_B$ if and only if $p \in E_B$. 
That is, for every $B\in \mathcal{B}$, 
\[
A_q \cap B \neq \emptyset \text{ if and only if } A_p \cap B \neq \emptyset.
\]
By
Lemma~\ref{lem:hd0} we immediately get that $d_{\mathscr{H}}(A_p, A_q) \leq \varepsilon$.
\end{proof}

\noindent We say that a point $p$ in a subset $X$ of $\R^n$ is $\mathbf{\varepsilon}$-\textbf{isolated} if $\|p - q \| \geq \varepsilon$ for all $q \in X$ with $p\neq q$.
We need the following lemma from metric geometry, whose proof we include for the convenience of the reader.

\begin{Lem}\label{lem:hdl}
Let $\varepsilon >0$, and let $A\subseteq \R^{m} \times \R^{n}$ be vertically bounded such that $U\subseteq \pi(A)$ and $( A_x )_{ x \in U }$ is HD-continuous.
Then the set of $(x,y) \in A$ such that $y$ is $\varepsilon$-isolated in $A_x$, is closed in $A$.
\end{Lem}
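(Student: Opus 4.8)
The plan is to prove that the set
$$ S := \{ (x,y) \in A : y \text{ is } \varepsilon\text{-isolated in } A_x \} $$
is closed in $A$ by a sequential argument, using HD-continuity to transport a witness of non-isolation between nearby fibres. Since $A \subseteq U \times \R^n$ sits inside the metric space $\R^{m+n}$, closedness in $A$ can be checked along sequences: it suffices to take a sequence $(x_k,y_k)_k$ in $S$ converging to some $(x,y) \in A$ and show $(x,y) \in S$. I would argue by contradiction, assuming $y$ is \emph{not} $\varepsilon$-isolated in $A_x$, so that there is $q \in A_x$ with $q \neq y$ and $\|y - q\| < \varepsilon$.

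The point requiring care is that HD-continuity controls the closures $\cl(A_x) = M_{\Cal A}(x)$, whereas $\varepsilon$-isolation is a statement about $A_x$ itself. The first step is therefore to upgrade $\varepsilon$-isolation to the closure: if $y_k$ is $\varepsilon$-isolated in $A_{x_k}$, then every point of $\cl(A_{x_k})$ distinct from $y_k$ lies at distance at least $\varepsilon$ from $y_k$. This holds because any $q' \in \cl(A_{x_k})$ with $q' \neq y_k$ is a limit of points of $A_{x_k}$ that are eventually distinct from $y_k$, each at distance $\geq \varepsilon$ from $y_k$, and the non-strict inequality $\geq \varepsilon$ is preserved in the limit.

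Next I would transport the witness $q$. Since $q \in A_x \subseteq \cl(A_x)$ and $M_{\Cal A}$ is continuous at $x$, we have $d_{\mathscr{H}}(M_{\Cal A}(x_k), M_{\Cal A}(x)) \to 0$, so by the definition of Hausdorff distance there are points $q_k \in \cl(A_{x_k})$ with $\|q_k - q\| \to 0$, i.e. $q_k \to q$. Because $q_k \to q$, $y_k \to y$, and $q \neq y$, we get $q_k \neq y_k$ for all large $k$. Combining this with the closure version of $\varepsilon$-isolation from the previous step gives $\|y_k - q_k\| \geq \varepsilon$ for all large $k$, and letting $k \to \infty$ yields $\|y - q\| \geq \varepsilon$, contradicting $\|y - q\| < \varepsilon$. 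Hence $y$ is $\varepsilon$-isolated in $A_x$, so $(x,y) \in S$, as desired.

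The only genuine subtlety — the ``metric geometry'' the authors have in mind — is precisely this interaction between $A_x$ and its closure: the HD-continuity hypothesis is phrased via closures, so one must first lift $\varepsilon$-isolation from $A_{x_k}$ to $\cl(A_{x_k})$ before the continuity of $M_{\Cal A}$ can be brought to bear. Everything else is a routine unwinding of the definitions of Hausdorff distance and of $\varepsilon$-isolation, together with the stability of non-strict inequalities under limits.
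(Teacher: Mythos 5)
Your proof is correct. The paper leaves this lemma as an unproved ``exercise of metric geometry,'' so there is no official argument to compare against; your sequential argument supplies exactly the intended verification, and you correctly identify and handle the one real subtlety, namely that HD-continuity is a statement about the closures $\cl(A_x)$ while $\varepsilon$-isolation is a statement about $A_x$ itself, which you bridge by observing that $\varepsilon$-isolation in $A_{x_k}$ passes to $\cl(A_{x_k})$ because the non-strict inequality $\geq \varepsilon$ survives limits.
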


\begin{proof}
Let $S_{\varepsilon}\subseteq A$ be the set of $(x,y) \in A$ such that $y$ is $\varepsilon$-isolated in $A_x$.
We need to show that $S_{\varepsilon}$ is closed in $A$. 
Let $(x,y)\in A$ and let $((x_i,y_i))_{i \in \N}$ be a sequence of elements in $S_{\varepsilon}$ converging to $(x,y)$.
Towards a contradiction, suppose that there is $z\in A_x$ such that $\|(x,z)-(x,y)\|< \varepsilon$.
Let $\gamma \in \R_{> 0}$ be such that $\|(x,z)-(x,y)\| < \varepsilon - 2 \gamma$. As $((x_i,y_i))_{i \in \mathbb{N}}$ converges to $(x,y)$,  there is $i_0 \in \N$ such that for all $i\in \N_{\geq i_0}$
\[
\|(x,y) - (x_i,y_i)\| < \gamma.
\]
Since $(A_x)_{x \in U}$ is HD-continuous, we may also assume that for all $i\in \N_{\geq i_0}$
\[
d_H(A_x ,A_{x_i}) < \gamma.
\]
Thus we can pick for each natural number $i \in \N_{\geq i_0}$ an element $z_i \in A_{x_i}$ such that
\[
 \| (x,y) - (x_i,z_i) \| < \gamma.
\]
Thus for all $i\in \N_{\geq i_0}$
\begin{align*}
|y_i - z_i| &= \| (x_i,y_i) - (x_i,z_i) \|\\
&\leq \|(x_i,y_i) - (x,y)\| + \|(x,y) - (x,z)\| + \| (x,z) -(x_i,z_i)\|\\
&< \gamma + (\varepsilon - 2\gamma) + \gamma< \varepsilon.
\end{align*}
This contradicts that $(x_i,y_i)\in S_{\varepsilon}$.
\end{proof}

\noindent
If $X \subseteq \R^m$ is a $D_\Sigma$ set and $Y \subseteq X$ is definable and closed in $X$, then $Y$ is $D_\Sigma$.
Corollary~\ref{cor:vert} follows.

\begin{Cor}\label{cor:vert}
Let $A\subseteq U \times \R^{n}$ be a vertically bounded $\Dsig$ set such that $(A_x)_{x \in U}$ is HD-continuous.
Then the set of $(x,y) \in A$ such that $y$ is $\varepsilon$-isolated in $A_x$, is $\DSig$.
\end{Cor}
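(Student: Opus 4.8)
The plan is to isolate and prove the general principle stated parenthetically just before the corollary—\emph{a definable set that is closed in a $\DSig$ set is again $\DSig$}—and then apply it to the set produced by Lemma~\ref{lem:hdl}. So I would first fix $\varepsilon > 0$ and set
\[
E := \{ (x,y) \in A : y \text{ is } \varepsilon\text{-isolated in } A_x \}.
\]
This $E$ is definable: $A$ is definable, being $\DSig$, and $\varepsilon$-isolation of $y$ in $A_x$ is expressed by the first-order condition that every $z \in A_x$ with $z \neq y$ satisfies $\|y - z\| \geq \varepsilon$. By Lemma~\ref{lem:hdl}, $E$ is closed in $A$. It therefore remains only to justify that a definable subset that is closed in a $\DSig$ set is itself $\DSig$.

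For this general fact, let $E$ be definable and closed in a $\DSig$ set $A$. Here ``closed in $A$'' means closed in the subspace topology, and for $E \subseteq A$ the closure of $E$ inside $A$ equals $\cl(E) \cap A$, where $\cl(E)$ denotes the ambient closure in $\R^n$; since $E$ is closed in $A$, we obtain $E = \cl(E) \cap A$. Definability is now used twice. First, because $E$ is definable, $\cl(E)$ is a closed \emph{definable} set and hence $\DSig$ by Proposition~\ref{prop:image}. Second, $A$ is $\DSig$ by hypothesis. Since a finite intersection of $\DSig$ sets is $\DSig$ (again Proposition~\ref{prop:image}), the set $E = \cl(E) \cap A$ is $\DSig$, as required.

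Applying this general fact to the set $E$ above completes the corollary. I do not expect a genuine obstacle: the entire content is carried by the closure properties of $\DSig$ sets recorded in Proposition~\ref{prop:image}, together with Lemma~\ref{lem:hdl}. The only points deserving a moment's care are the identity $E = \cl(E) \cap A$—which is exactly where the hypothesis that $E$ is \emph{closed in} $A$ (rather than merely contained in $A$) is used—and the observation that it is the definability of $E$ that makes $\cl(E)$ a \emph{definable} closed set, so that Proposition~\ref{prop:image} is applicable.
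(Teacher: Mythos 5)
Your proposal is correct and follows exactly the route the paper intends: the paper's justification is the one-line remark that a definable set closed in a $\DSig$ set is $\DSig$, and your argument via $E = \cl(E) \cap A$ together with the closure properties in Proposition~\ref{prop:image} is precisely the standard way to fill in that remark. No gaps.
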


\section{$C^k$-smooth points on $\Dsig$-sets}
\noindent
We prove Theorem A in this section. Recall that if $\mathcal{R}$ is not type A, then $\mathcal{R}$ defines an isomorphic copy of $\mathcal{B}$ by Fact~\ref{thm:old}(1). If $\mathcal{R}$ defines a Cantor subset of $\R$, then $\mathcal{R}$ defines an isomorphic copy of $\mathcal{B}$ by Fact~\ref{prop:cantor0}. Thus it suffices to show  Theorem A (and Theorem B) under the assumption that $\mathcal{R}$ is type A and does not define a Cantor subset of $\R$. We suppose throughout the remainder of this section that $\mathcal{R}$ is type A. Let $\pi : \R^{m+n} \to \R^m$ be the coordinate projection onto the first $m$ coordinates. \newline

%

\noindent We need another elementary lemma from real analysis, whose proof we again include for the reader's convenience.

\begin{Lem}\label{lem:tube}
Let  $\varepsilon > 0$, let $A \subseteq \mathbb{R}^{m + n}$, let $U \subseteq \mathbb{R}^m$ be nonempty and open, and let $f : U \to \mathbb{R}^n$ be continuous such that $f(x)$ is an $\varepsilon$-isolated element of $A_x$ for all $x \in U$.
Then there are nonempty and open sets $V \subseteq U$ and  $W \subseteq \mathbb{R}^n$ such that 
\[
A \cap (V \times W) = \Gamma(f|_V).
\]
\end{Lem}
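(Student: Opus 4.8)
The plan is to prove Lemma~\ref{lem:tube} by a direct continuity argument exploiting the fact that $f(x)$ is uniformly $\varepsilon$-isolated in each fiber $A_x$, combined with the continuity of $f$. The hint in the excerpt — ``Take $V$ such that $\|f(x) - f(y)\| < \frac{\varepsilon}{2}$ for all $x,y \in V$'' — essentially dictates the shape of the argument. First I would fix a point $a \in U$ and set $w := f(a)$. Using continuity of $f$ at $a$, I would choose a nonempty open neighbourhood $V \subseteq U$ of $a$ small enough that $\|f(x) - f(y)\| < \frac{\varepsilon}{2}$ for all $x,y \in V$; in particular $\|f(x) - w\| < \frac{\varepsilon}{2}$ for all $x \in V$. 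Then I would set $W := B_{\varepsilon/2}(w)$, the open ball of radius $\frac{\varepsilon}{2}$ about $w$.

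With these choices, the containment $\Gamma(f|_V) \subseteq A \cap [V \times W]$ is immediate: for $x \in V$ we have $f(x) \in A_x$ by hypothesis, so $(x,f(x)) \in A$, and $f(x) \in W$ since $\|f(x) - w\| < \frac{\varepsilon}{2}$. The substance of the lemma is the reverse containment. So I would fix $(x,y) \in A \cap [V \times W]$ and argue that $y = f(x)$. The point is that $y \in W$ forces $\|y - w\| < \frac{\varepsilon}{2}$, while $\|f(x) - w\| < \frac{\varepsilon}{2}$ as well, so by the triangle inequality $\|y - f(x)\| < \varepsilon$. Since $(x,y) \in A$ means $y \in A_x$, and $f(x)$ is $\varepsilon$-isolated in $A_x$, every point of $A_x$ other than $f(x)$ is at distance at least $\varepsilon$ from $f(x)$. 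Hence $\|y - f(x)\| < \varepsilon$ is only possible if $y = f(x)$, which is exactly what we want.

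The argument is genuinely routine, so there is no serious obstacle; the only point requiring minor care is the bookkeeping with the diameter-versus-radius conventions, i.e.\ ensuring that the $\frac{\varepsilon}{2}$ bounds on both $\|y - w\|$ and $\|f(x) - w\|$ combine via the triangle inequality to a bound strictly below the isolation threshold $\varepsilon$. This is why the neighbourhood $V$ is chosen with the oscillation-style bound $\frac{\varepsilon}{2}$ rather than $\varepsilon$: it leaves exactly enough room for the triangle inequality to conclude strict inequality, and the strictness matters because $\varepsilon$-isolation is phrased with a non-strict ``$\geq \varepsilon$''. I would also note that definability plays no role in this lemma — it is purely a statement of real analysis, consistent with its being flagged as ``an elementary fact of real analysis'' — so no appeal to type A, SBCT, or $\Dsig$-machinery is needed here.
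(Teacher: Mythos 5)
Your proof is correct and follows exactly the route the paper intends: the paper gives only the hint ``take $V$ such that $\|f(x)-f(y)\| < \frac{\varepsilon}{2}$ for all $x,y \in V$'' and leaves the rest to the reader, and your argument (shrink $V$ by continuity, take $W = B_{\varepsilon/2}(f(a))$, then combine the two $\frac{\varepsilon}{2}$ bounds via the triangle inequality against the $\varepsilon$-isolation of $f(x)$ in $A_x$) is precisely the intended completion. No gaps.
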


\begin{proof}
Let $p \in U$. By continuity of $f$, there is an open neighborhood $V \subseteq U$ of $p$ such that for all $q\in V$
\[
\| f(q) - f(p) \| < \frac{1}{2}\varepsilon.
\]
Let $W$ be the the open ball with center $f(p)$ and radius $\frac{1}{2}\varepsilon$.
We now show that the conclusion of the lemma holds. The right-to-left inclusion follows immediately from the definition of $W$.
We prove the left-to-right inclusion.
Let $(x,y) \in A \cap (V \times W)$.
By definition of $V$ and $W$, 
\[
\| y - f(x) \| \leq \| y - f(p) \| +  \| f(p) - f(x) \| < \varepsilon.
\]
Since $f(x)$ is $\epsilon$-isolated in $A_x$ and $y \in A_x$,  we have $y = f(x)$.
\end{proof}


\begin{Lem}\label{lem:smoothpoint1}
Let $k \geq 0$, let $A \subseteq \mathbb{R}^{m + n}$ be $\DSig$, and let $U\subseteq \R^m$ be a nonempty, open and definable set such that $U\subseteq \pi(A)$ and the isolated points of $A_x$ are dense in $A_x$ for all $x \in U$.
Then there exist a nonempty, open and definable set $V \subseteq U$ and $W \subseteq \mathbb{R}^n$ and a definable $C^k$-function $f : V \to W$ such that
\[
A \cap (V \times W) = \Gamma(f).
\]
If $\mathcal{R}$ is not field-type, then we can take $f$ to be affine.
\end{Lem}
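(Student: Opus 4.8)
The plan is to manufacture a definable continuous \emph{selection} $f$ whose graph lies in $A$ and picks out a \emph{uniformly} isolated point of each fibre, then to use Lemma~\ref{lem:tube} to see that $A$ coincides locally with $\Gamma(f)$, and finally to invoke Theorem~\ref{thm:old}(3) (resp.\ (4)) to upgrade $f$ to a $C^k$ (resp.\ affine) function after shrinking $V$. The first step is to reduce to the vertically bounded case, since Corollary~\ref{cor:vert} — the source of definability of the ``$\varepsilon$-isolated locus'' — requires it. For $t>0$ the sets $A\cap(U\times(-t,t)^n)$ are $\DSig$ (an open box is $\DSig$ and $\DSig$ is closed under finite intersection by Proposition~\ref{prop:image}), and their projections $P_t:=\pi(A\cap(U\times(-t,t)^n))$ form an increasing family of $\DSig$ subsets of $U$ with $\bigcup_{t>0}P_t=U$, because every fibre $A_x$ is nonempty. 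SBCT (Proposition~\ref{SBCT}) then gives a $t$ with $\Int(P_t)\neq\emptyset$; replacing $U$ by $\Int(P_t)$ and $A$ by $A\cap(U\times(-t,t)^n)$ I may assume $A$ is vertically bounded, recalling that $(-t,t)^n$ is an $\ell_\infty$-ball. Intersecting with the box does not destroy the hypothesis: for each $x$, the isolated points of $A_x\cap(-t,t)^n$ are exactly the isolated points of $A_x$ lying in the box, and these stay dense in $A_x\cap(-t,t)^n$. Applying Proposition~\ref{lem:hd} and shrinking $U$ once more, I may further assume $(A_x)_{x\in U}$ is HD-continuous.

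Next I fix a uniform isolation constant. For $\varepsilon>0$ set $A^\varepsilon:=\{(x,y)\in A: y\text{ is }\varepsilon\text{-isolated in }A_x\}$; by Corollary~\ref{cor:vert} each $A^\varepsilon$ is $\DSig$, and the family increases as $\varepsilon\downarrow 0$. Hence $E^\varepsilon:=\pi(A^\varepsilon)$ is $\DSig$ by Proposition~\ref{prop:image}, the family $(E^{1/t})_{t>0}$ is increasing, and $\bigcup_{\varepsilon>0}E^\varepsilon=U$ because every $A_x$ has an isolated point, which is $\varepsilon$-isolated for some $\varepsilon>0$. A second application of SBCT produces an $\varepsilon>0$ with $\Int(E^\varepsilon)\neq\emptyset$. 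On a nonempty open $U_0\subseteq\Int(E^\varepsilon)$ the set $A^\varepsilon$ projects onto $U_0$, so $\DSig$-selection (Proposition~\ref{select}) yields a nonempty definable open $V_1\subseteq U_0$ and a continuous definable $f:V_1\to\mathbb{R}^n$ with $\Gamma(f)\subseteq A^\varepsilon$; that is, $f(x)$ is an $\varepsilon$-isolated point of $A_x$ for every $x\in V_1$. Lemma~\ref{lem:tube} then supplies nonempty open $V_2\subseteq V_1$ and $W\subseteq\mathbb{R}^n$, which I shrink to lie inside $(-t,t)^n$, with $A\cap[V_2\times W]=\Gamma(f|_{V_2})$; this equality also holds for the original $A$, since $W\subseteq(-t,t)^n$.

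It remains only to gain smoothness. Since $\mathscr{R}$ is type A and $f$ is continuous and definable, Theorem~\ref{thm:old}(3) provides a dense open definable $V\subseteq V_2$ on which $f$ is $C^k$; intersecting the graph identity with $V\times\mathbb{R}^n$ preserves $A\cap[V\times W]=\Gamma(f|_V)$, which is exactly what is wanted. If $\mathscr{R}$ is not of field-type, I instead use Theorem~\ref{thm:old}(4) to find $V$ on which $f$ is locally affine and shrink $V$ to a ball so that $f|_V$ is genuinely affine. The main obstacle is the \emph{uniformization}: turning the pointwise hypothesis ``the isolated points are dense in each fibre'' into a single pair $(t,\varepsilon)$ valid on an open set, so that the isolated locus becomes an honest $\DSig$ set to which selection can be applied. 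This is precisely what the two SBCT arguments, together with the vertical-boundedness reduction that enables Corollary~\ref{cor:vert}, accomplish; everything after the selection step is routine.
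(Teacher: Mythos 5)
Your proof is correct and follows essentially the same route as the paper's: reduce to the vertically bounded case via SBCT, pass to an HD-continuous subfamily using Proposition~\ref{lem:hd}, apply SBCT again to extract a uniform isolation constant $\varepsilon$, select a continuous section of the $\varepsilon$-isolated locus by $\DSig$-selection, and finish with Lemma~\ref{lem:tube} and Theorem~\ref{thm:old}(3)/(4). The only differences are cosmetic --- you apply Lemma~\ref{lem:tube} before rather than after the smoothness upgrade, and you spell out two details the paper leaves implicit (that vertical truncation preserves density of isolated points in the fibres, and that shrinking $W$ into the truncating box transfers the graph identity back to the original $A$).
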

\begin{proof}
We first reduce to the case when $A$ is vertically bounded. For $r\in \R_{>0}$, set
\[
 A(r) := \{ (x,y) \in A : \| y \| < r \}.
\]
Thus $\bigcup_{r > 0} A(r) = A$.
Then $( \pi \left( A(r) \right) )_{r > 0}$ is an increasing definable family of $\DSig$ sets such that
\[
U\subseteq \bigcup_{r > 0} \pi(A(r)).
\]
By Fact \ref{SBCT}, there is $t\in \R_{>0}$ such that $\pi(A(t))$ has interior. After replacing $U$ with $\Int(A(t))$ and $A$ with $A(t)$ if necessary, we may assume that $A$ is vertically bounded. After applying Lemma~\ref{lem:hd} and replacing $U$ with a smaller nonempty, open and definable set, we may assume that $( A_x )_{ x \in U}$ is HD-continuous.
For each $\varepsilon > 0$ set 
\[
S_\varepsilon := \{(x,y) \in A \ : \  y  \text{ is $\varepsilon$-isolated in $A_x$}\}.
\]
By Corollary~\ref{cor:vert} each $S_\varepsilon$ is $\Dsig$. Because $A_x$ has an isolated point for each $x \in U$, we get that 
\[
U \subseteq \bigcup_{\varepsilon  > 0} \pi(S_\varepsilon).
\]
Since $( \pi(S_\epsilon) )_{ \epsilon > 0 }$ is an increasing family of $\DSig$ sets, Fact \ref{SBCT} gives a $\delta>0$ such that  $\pi(S_\delta)$ has interior in $U$.
After replacing $U$ with a smaller nonempty definable open set if necessary, we may assume that $U$ is contained in $\pi(S_\delta)$.
Applying $\DSig$-selection we obtain a nonempty, open and definable set $V \subseteq U$ and a continuous and definable function $f : V \to \mathbb{R}^n$ such that $(x,f(x)) \in S_\delta$ for all $x \in V$. Thus $f(x)$ is $\delta$-isolated in $A_x$ for all $x \in V$.
After applying Fact~\ref{thm:old}(3) and shrinking $V$ if necessary, we may assume that $f$ is $C^k$.
Now apply Lemma~\ref{lem:tube}.\newline

\noindent Suppose that $\mathcal{R}$ is not field-type. Then after applying Fact~\ref{thm:old}(4) and shrinking $V$ if necessary, we have that $f$ is affine on $V$.
\end{proof}

\begin{Lem}\label{lem:cantor1}
Suppose $\mathcal{R}$ does not define a Cantor set. Let $A \subseteq \mathbb{R}^n$ be $\DSig$. If $\dim A = 0$, then the isolated points of $A$ are dense in $A$.
\end{Lem}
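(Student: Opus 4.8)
The plan is to argue by contradiction and to reduce everything to producing a definable Cantor subset of $\mathbb{R}$, which is forbidden by hypothesis. Suppose the isolated points of $A$ are not dense in $A$. Then there is a point $p \in A$ with an open neighbourhood $V$ containing no isolated point of $A$; since $V$ is open, no point of $A \cap V$ is isolated in $A \cap V$ either. Shrinking $V$ to a bounded open set still containing $p$, I set $B := A \cap V$. Then $B$ is nonempty, bounded, and $\DSig$ (the intersection of the $\DSig$ set $A$ with the open, hence $\DSig$, set $V$, using Proposition~\ref{prop:image}), it has $\dim B \leq \dim A = 0$, and it has no isolated points. The whole proof then rests on one observation, which I will use twice: if $S \subseteq \mathbb{R}$ is nonempty, bounded, $\DSig$, of dimension $0$ and without isolated points, then $\cl(S)$ is a Cantor subset of $\mathbb{R}$, contradicting the hypothesis on $\mathscr{R}$. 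Indeed $\cl(S)$ is closed, bounded, and definable, hence compact and definable; since $\dim S = 0$ forces $S$ to have empty interior, SBCT (Proposition~\ref{SBCT}) shows $S$ is nowhere dense, so $\cl(S)$ is nowhere dense; and the closure of a set without isolated points has no isolated points by an elementary check.

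I then run an induction on $n$. For $n = 1$ the set $B$ itself satisfies the hypotheses of the observation above, giving the desired contradiction immediately.

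For $n \geq 2$, let $\pi : \mathbb{R}^n \to \mathbb{R}^{n-1}$ be the projection onto the first $n-1$ coordinates and put $C := \pi(B)$. Then $C$ is $\DSig$ (Proposition~\ref{prop:image}), bounded, nonempty, and $\dim C \leq \dim B = 0$ by Proposition~\ref{prop:project1}. By the inductive hypothesis (the Lemma in dimension $n-1$, under the standing assumption that $\mathscr{R}$ defines no Cantor subset) the isolated points of $C$ are dense in $C$; in particular $C$ has an isolated point $x_0$. The crux is that the fibre over such an $x_0$ inherits the absence of isolated points: fix an open $N \ni x_0$ in $\mathbb{R}^{n-1}$ with $N \cap C = \{x_0\}$, and take any $y_0 \in B_{x_0}$. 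The point $(x_0, y_0)$ is a limit of points of $B \setminus \{(x_0,y_0)\}$; their $\pi$-images lie in $C$ and converge to $x_0$, hence eventually lie in $N \cap C = \{x_0\}$, so their first $n-1$ coordinates equal $x_0$. Thus $y_0$ is a limit of points of $B_{x_0} \setminus \{y_0\}$, and $B_{x_0}$ has no isolated points. Since sections of $\DSig$ sets are again $\DSig$ (with the parameter $x_0$), $B_{x_0}$ is a nonempty, bounded, $\DSig$ subset of $\mathbb{R}$ of dimension $0$ without isolated points, so the observation applies to $B_{x_0}$ and yields a Cantor subset, the desired contradiction.

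The main obstacle is precisely this projection step: a coordinate projection can create isolated points, so one cannot simply project $B$ to $\mathbb{R}$ and hope to preserve the absence of isolated points. The remedy is to project only to $\mathbb{R}^{n-1}$, invoke the inductive hypothesis to locate a point that is topologically isolated in the projection $C$, and then exploit that this isolation confines all points of $B$ near the fibre to the single fibre over $x_0$ — so that non-isolation in $B$ descends to non-isolation within $B_{x_0}$. Dimension control along the projection is handled by Proposition~\ref{prop:project1} and nowhere-density by SBCT, so the only genuinely geometric point is this fibre argument.
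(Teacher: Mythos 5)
Your proof is correct and follows essentially the same route as the paper's: induction on $n$, projection to $\mathbb{R}^{n-1}$, locating an isolated point of the projection, and exploiting that isolation to reduce to the fibre, with the $n=1$ case handled by the no-Cantor-set hypothesis via SBCT. You phrase it contrapositively (a fibre without isolated points yields a definable Cantor set) where the paper argues directly (an isolated point of the fibre over an isolated point of the projection is isolated in $A$), but the content is the same.
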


\noindent It is easy to see that $\mathcal{R}$ defines a Cantor subset of $\R$ if and only if it defines a nonempty, nowhere dense subset of $\R$ without isolated points (take closures).

\begin{proof}[Proof of Lemma \ref{lem:cantor1}]
We proceed by induction on $n$.
Suppose $n=1$. Since $\dim A =0$, we know that $A$ is nowhere dense by SBCT. 
Let $U\subseteq \R$ be an open set that intersects $A$. Since nowhere dense, definable subsets of $\R$ have isolated points, the intersection $A\cap U$ must contain an isolated point. Thus the isolated points of $A$ are dense in $A$.\newline

\noindent Now suppose $n>1$.
Let $\pi_0 : \R^n \to \R^{n - 1}$ be the coordinate projection onto the first $n$ coordinates.
By Fact~\ref{prop:project1} we have that $\dim \pi_0(A) = 0$.
Since $\pi_0(A)$ is $\DSig$, it contains an isolated point $x$ by induction.
Since $A_x$ is a definable subset of $\R$ with topological dimension 0, it also contains an isolated point $t$.
This $(x,t)$ is an isolated point in $A$.
\end{proof}

\noindent We are now ready to prove Theorem A. In the proof below we apply the fact that an arbitrary subset of $\R^n$ has topological dimension $n$ if and only if it has nonempty interior (see \cite[Theorem 1.8.10]{Engelking}).

\begin{proof}[Proof of Theorem A]
As pointed out above, by Fact \ref{thm:old}(1) and Fact \ref{prop:cantor0} we can reduce to the case that $\mathcal{R}$ is type A and does not define a Cantor set. 
 Let $A \subseteq \mathbb{R}^n$ be $\DSig$ and nonempty. We prove that the $C^k$-smooth points of $A$ are dense in $A$. It is enough to show that every open and definable subset of $\R^n$ that intersects $A$, contains a $C^k$-smooth point.
Let $U \subseteq \mathbb{R}^n$ be an open and definable set that intersects $A$. Let $d$ be the topological dimension of $U \cap A$.
If $d = 0$, apply Lemma~\ref{lem:cantor1}.
If $d = n$, then $U \cap A$ has interior. Every interior point of $A$ is $C^k$-smooth. Now suppose that $1 \leq d \leq n - 1$.
By Fact~\ref{prop:project} there is a coordinate projection $\pi_1 : \mathbb{R}^n \to \mathbb{R}^d$ and a nonempty, open and definable set $V \subseteq \mathbb{R}^d$ such that $V \subseteq \pi_1(A)$ and $\dim(\pi_1^{-1}(\{x\})) = 0$ for all $x \in V$.
Without loss of generality we can assume that $\pi_1$ is the coordinate projection onto the first $d$ coordinates. Since $\dim(\pi_1^{-1}(\{x\})
)=0$ for all $x\in V$, we have that $\dim A_x =0$ for all $x\in V$. By Lemma~\ref{lem:cantor1}, the isolated points of $A_x$ are dense in $A_x$ for all $x \in V$.
Now apply Lemma~\ref{lem:smoothpoint1}.
\end{proof}


\section{Coincidence of dimensions and Theorem B}\label{section:dimensions}
\noindent
In this section, we prove Theorem B, which we recall as a convenience for the reader.

\begin{thmB} Let $X\subseteq \R^n$ be nonempty, closed  and definable. Then
\begin{itemize}
\item[(i)] If $X$ is bounded, the affine points of $X$ are not dense in $X$, and we have $\dim X < \dim_{\operatorname{Assouad}} X$, then $\mathcal{R}$ defines an isomorphic copy of $\mathcal{B}$.
\item[(ii)] If $X$ does not have affine points and $\dim X < \dim_{\operatorname{Packing}} X$, then $\mathcal{R}$ defines an isomorphic copy of $\mathcal{B}$.
\end{itemize}
\end{thmB}

\noindent We obtain Theorem B as a corollary of Theorem A and an extension of Fact \ref{fact:equal-dim1}. Recall that by Fact \ref{fact:equal-dim1} on
every bounded $\DSig$ set $X$ in a type A expansion of $(\R,+,\cdot)$, the Assouad dimension of $X$ equals the topological dimension of $X$. We refer the reader to \cite{HM} and Luukkainen \cite{Luukkainen} for more on the relevance and significance of this phenomenon. We now extend this result to type A expansions that are field-type.

\begin{Thm}
\label{dimconfiety}\footnote{This theorem first appeared in a preprint version of \cite{HW-continuous}, but has been removed from the final version of \cite{HW-continuous}.}
Suppose $\Cal R$ is type A and field-type. Let $X$ be a bounded $\DSig$ set.
Then the Assouad dimension of $X$ is equal to the topological dimension of $X$.
\end{Thm}

\noindent The coincidence of dimensions for $\DSig$ sets does not extend to type A expansions of $(\R,<,+)$.
For example, $S = \{ 1/n : n \in \N, n \geq 1 \}$ has Assouad dimension $1$, topological dimension $0$, and $( \R , < , + , ( x \mapsto \lambda x)_{ \lambda \in \R}, S ) $ is type A by \cite[Theorem B]{FHW-Compact}. The assumption that $X$ is bounded is also necessary.
The structure $( \R, <, +, \sin)$ is locally o-minimal and hence type A, see Section~\ref{section:top-assouad} below.
This structure is also field-type, as the sine function is $C^2$ and non-affine, and defines $\pi\Z = \sin^{-1}(\{0\})$. The latter set has topological dimension $0$ and Assouad dimension $1$.\newline

\noindent We need to clarify one notation before giving the proof of Theorem \ref{dimconfiety}. Let $e$ be the Euclidean metric on $\R^n$ and let $X\subseteq \R^n$. When we refer to the Assouad dimension of $X$, we mean the Assouad dimension of the metric space $(X,e_X)$, where $e_X$ is the restriction of the metric $e$ to $X$.

\begin{proof}[Proof of Theorem \ref{dimconfiety}]
Since $\Cal R$ is field-type, we know that $\Cal R$ defines a $C^2$ function $f : [0,1] \to \R$ with nonconstant derivative by \cite[Theorem A(4), Lemma 5.1]{HW-continuous} and Fact \ref{thm:old}(3).
By \cite[Lemma 6.2]{HW-continuous} there is an interval $I\subseteq \R$, continuous and definable functions $\oplus,\otimes : I^2 \to I$, an isomorphism $\tau :(I,<,\oplus,\otimes)\to (\mathbb{R},<,+,\cdot)$ and a subinterval $J\subseteq I$ such that the restriction of $\tau$ to $J$ is a $C^k$-diffeomorphism for $k\geq 1$.
It is clear from the proof of \cite[Lemma 6.2]{HW-continuous} that $\tau$ can be chosen to agree with $f'$ on $J$. Since $\tau$ is strictly increasing, we also have that $f'$ is strictly increasing.
Thus $f''$ is positive on $J$.
Since $f''$ is continuous,  we may suppose (after further shrinking $J$ if necessary) that $f''$ is bounded above and bounded away from zero on $J$.
It follows from the mean value theorem that $f'$ is bi-Lipschitz on $J$.\newline

\noindent Let $d$ be the natural metric on $(I,<,\oplus,\otimes)$ given by
\[
d(x,y) = \tau (\| x \ominus y \|_I ) \quad \text{for all } x,y \in I,
\]
where $\ominus$ and $\|\cdot\|_I$ are the inverse images of $-$ and $\|\cdot\|$ under $\tau$.
We now consider the metric space $(J,d_J)$ where $d_J$ is the restriction of $d$ to $J$. As $\tau$ is an ordered field isomorphism that agrees with $f'$ on $J$, we have
\[
d(x,y) = \| f'(x) - f'(y) \| \quad \text{for all } x,y \in J.
\]
Then the identity map $(J, e_J) \to (J,d_J)$ is a bi-Lipschitz equivalence, because $f'$ is bi-Lipschitz.
Let $d_n$ be the metric on $J^n$ given by
\[
d_n ( x, y ) = \max \{ d ( x_1,y_1) , \ldots, d(x_n,y_n) \}
\]
for all  $x = (x_1,\ldots,x_n), y = (y_1,\ldots,y_n)\in J^n. $
It is easy to see that the identity map $(J^n , d_n) \to (J^n, e_{J^n})$ is also a bi-Lipschitz equivalence.\newline

\noindent
Let $X\subseteq \R^n $ be a bounded $D_\Sigma$ set. Let $q \in \mathbb{Q}_{>0}, t \in \mathbb{R}^n$ be such that $qX + t$ is a subset of $J^n$.
Then $qX + t$ has the same Assouad dimension and topological dimension as $X$, because  invertible affine maps are bi-Lipschitz and bi-Lipschitz equivalences preserve both Assouad and topological dimension. After replacing $X$ with $qX + t$ if necessary we suppose that $X \subseteq J^n$. By \cite[Theorem E]{FHW-Compact} the topological dimension of the closure of $X$ agrees with the topological dimension of $X$. It follows directly from the definition of Assouad dimension that taking closures does not raise Assouad dimension of subsets of $\mathbb{R}^n$. It therefore suffices to prove the theorem for closed definable sets.\newline

\noindent We assume that $X$ is closed. Consider the structure $(I, < , \oplus, \otimes, X)$. This structure is isomorphic via $\tau$ to $\Cal S := (\R, < , +, \cdot, \tau(X))$.
Since $\Cal R$ is type A, so is $\Cal S$. In particular, $\Cal S$ cannot define every Borel set.
Therefore by Fact \ref{fact:equal-dim1} Assouad dimension and topological dimension agree on $(\tau(X),e_{\tau(X)})$. Since $\tau$ is bi-Lipschitz on $J$, it follows from the definition of $d_n$ that
the Assouad and topological dimensions of $(X, (d_n)_X)$ agree. Since $\operatorname{id} : (J^n, d_n) \to (J^n, e_{J^n})$ is bi-Lipschitz, the Assouad and topological dimensions of $X$ agree.
\end{proof}

\noindent
The reduction to the case when $X$ is closed is necessary as $(I,<,\oplus,\otimes,X)$ need not define a witness that $X$ is $\DSig$.
Before proving Theorem B, we prove the following corollary to Theorem~\ref{dimconfiety}.

\begin{Cor}\label{cor:harry}\footnote{We thank Harry Schmidt for asking about $(\R,<,+,\sin(1/t))$.}
Let $f : (0,1] \to \R$ be the function mapping $t$ to $\sin(1/t)$, and let $\tan: \R \setminus [\pi\Z + \frac{\pi}{2}] \to \R$ be the usual tangent function.
Then $(\R,<,+,f)$ and $(\R,<,+,\tan)$ are type C.
\end{Cor}

\noindent
The second claim of Corollary~\ref{cor:harry} requires the following result from metric geometry due to Garc\'{\i}a, Hare, and Mendivil~\cite[Proposition 3.1]{GHM} (see also Fraser and Yu \cite[Theorem 6.1]{FY}).

\begin{fact}
\label{fact:sequence}
Let $g : \R \to \R_{>0}$ be a differentiable function such that $g$ and $-g'$ are strictly decreasing on $[\delta,\infty)$ for some $\delta > 0$ and that both $g(t),-g'(t)$ tend to $0$ as $t \to \infty$.
Then one of the following holds:
\begin{enumerate}
\item $\{ g(n) : n \in \N\}$ has Assouad dimension one, or
\item there is $\alpha\in (0, 1)$ such that $g(n) \le \alpha^n$ for all sufficiently large $n\in \N$.
\end{enumerate}
In the second case $\{g(n) : n \in \N \}$ has Assouad dimenson zero.
\end{fact}

\begin{proof}[Proof of Corollary~\ref{cor:harry}]
Both structures are field-type as they define non-affine $C^2$ functions.
Observe that 
\[
\{ x \in (0,1] \ : \ f(t) = 0 \} =  \left\{ \frac{1}{\pi n}  \ : \ n \in \N_{>0}\right\}. 
\]
Since Assouad dimension is invariant under inversion~\cite[Theorem XIII.5.2]{Luukkainen} and rescaling, we obtain
\[
\dim_\mathrm{Assouad} \left\{ \frac{1}{\pi n}  \ : \ n \in \N_{>0}\right\} = \dim_\mathrm{Assouad} \pi\N = \dim_\mathrm{Assouad} \N = 1.
\]
By Theorem \ref{dimconfiety}, we know that $(\R,<,+,f)$ can not be type A. Thus $(\R,<,+,f)$ has to be type C by Fact \ref{thm:old}(2). \newline

\noindent Now let $\arctan: \R \to (-\pi/2,\pi/2)$ be the usual arctangent.
Note that $(\R,<,+,\tan)$ defines both $\arctan$ and $\pi\Z$.
Let $g : \R_{\ge 0} \to (0,\pi/2]$ be given by 
\[
g(t) = (\pi/2) - \arctan(\pi t).
\]
It is easy to see that $g$ satisfies the conditions of Fact~\ref{fact:sequence} and that $g(n)$ does not decrease exponentially.
Hence  $\{ g(n) : n \in \N\}$ has Assouad dimension one by Fact~\ref{fact:sequence}.
Note that $\{ g(n) : n \in \N \}$ is definable in $(\R,<,+,\tan)$. The conclusion that $(\R,<,+,\tan)$ is type C, follows as in the case of $(\R,<,+,f)$.
\end{proof}

\begin{proof}[Proof of Theorem B]
(i) Let $X \subseteq \mathbb{R}^n$ be nonempty, $\DSig$, and bounded such that the affine points of $X$ are not dense in $X$ and the topological dimension of $X$ is strictly less than the Assouad dimension of $X$. We have to show that $\mathcal{R}$ defines an isomorphic copy of $\mathcal{B}$. By Theorem A, either $\mathcal{R}$ defines an isomorphic copy of $\mathcal{B}$ or $\mathcal{R}$ is field-type. Suppose $\mathcal{R}$ is field-type. By Theorem~\ref{dimconfiety}, we get that $\mathcal{R}$ can not be of type A. However, every structure of type B or type C defines an isomorphic copy of $\mathcal{B}$.\newline

\noindent (ii) Let $X \subseteq \R^n$ be $\DSig$ such that $X$ has no affine points and the topological dimension of $X$ is strictly less than the packing dimension of $X$. We need establish that $\mathcal{R}$ defines an isomorphic copy of $\mathcal{B}$. It follows directly from the definition of the packing dimension that whenever $Y \subseteq \R^n$ is Borel, and $(Y_m)_{m \in \N}$ is a collection of Borel subsets of $Y$ covering $Y$, then $\dim_{\text{Packing}} Y = \sup_m \dim_{\text{Packing}} Y_m$.
The same statement holds for topological dimension provided each $Y_m$ is $F_\sigma$ by \cite[Corollary 1.5.4]{Engelking}.\newline

\noindent Given $m \in \Z^n$ we let $X_{m}$ be $([0,1]^n + m) \cap X $.
As each $X_{m}$ is $F_\sigma$, we have $\dim X = \sup_{m \in \Z^n} \dim X_{m}$.
Thus if $\dim X_{m} = \dim_{\text{Packing}} X_{m}$ for all $m \in \Z^n$, we obtain $\dim X = \dim_{\text{Packing}} X$.
Suppose that $\dim X_{m} < \dim_{\text{Packing}} X_{m}$ for some $m \in \Z^n$. Since $X_{m}$ has no affine points, $\mathcal{R}$ defines an isomorphic copy of $\mathcal{B}$ by Statement (i) of Theorem B.
\end{proof}

\section{Definability of $C^k$-smooth points}\label{section:ckdef}
\noindent Fix $k \in \N$ with $k \geq 2$. Throughout this section, let $X$ be a definable subset of $\R^n$. The main goal of this section is the following proposition.

\begin{Prop}
\label{prop:defineck}
The set of $C^k$-smooth points of $X$ is definable.
Equivalently: if $Y$ is a subset of $\R^n$, then the set of $C^k$-smooth points of $Y$ is $(\R,<,+,Y)$-definable.
\end{Prop}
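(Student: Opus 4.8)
The goal is to show that the set of $C^k$-smooth points of a definable set $X \subseteq \R^n$ is itself definable, and the equivalence at the end tells us this is a statement purely about the structure $(\R,<,+,X)$ — no appeal to tameness like type A is available here since $X$ is arbitrary. So the plan must produce a \emph{first-order formula} (in the language of ordered groups augmented by a predicate for $X$) that defines the smooth locus. Let me think about what $C^k$-smoothness at a point $p$ means and how to capture it uniformly.

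The key idea I would pursue is that $p$ is a $C^k$-smooth point of $X$ precisely when, in some neighbourhood $U$ of $p$, the set $U\cap X$ coincides with the graph of a $C^k$ function over some coordinate subspace (after a suitable choice of coordinates), i.e. locally $X$ is a $C^k$-submanifold of some dimension $d$. The plan is to express, by a first-order formula, the existence of such a local graph representation. Fix a dimension $d$ and a coordinate projection $\pi:\R^n\to\R^d$. I want to say: there is a box $V\times W$ around $p$ (with $V\subseteq\R^d$, $W\subseteq\R^{n-d}$ open) such that $X\cap(V\times W)$ is the graph of a function $f:V\to W$, and that $f$ is $C^k$. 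The graph condition is easily first-order: for every $x\in V$ there is a unique $y\in W$ with $(x,y)\in X$. The subtle part is encoding ``$f$ is $C^k$'' without naming $f$ as a function (which we cannot quantify over) and without multiplication (since we only have $(\R,<,+,X)$, not the field).

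\medskip

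\noindent\textbf{Encoding differentiability with only $+$ and $<$.}
The central obstacle is that $C^k$-ness is usually phrased in terms of limits of difference quotients, and difference quotients require division/multiplication. The plan is to replace the quantitative limit definition of the derivative by an order-theoretic/metric one expressible with addition and order alone. The crucial observation is that a linear map $L:\R^d\to\R^{n-d}$ is definable in $(\R,<,+)$ \emph{for each fixed rational, indeed each fixed real, coefficient matrix as a family}: a candidate affine approximation $y = L(x-a)+b$ ranges over affine maps, and affine maps together with the $\varepsilon$-$\delta$ comparison ``$\|f(x)-L(x-a)-b\|\le \varepsilon\|x-a\|$'' can be written using only $+,<$ and the absolute-value/max operations that are $(\R,<,+)$-definable (recall the paper fixes the $\ell_\infty$-norm precisely because it is $(\R,<,+)$-definable). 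Differentiability of $f$ at $a$ then becomes: for every $\varepsilon>0$ there is $\delta>0$ and an affine map $A$ such that for all $x$ with $\|x-a\|<\delta$ one has $\|f(x)-A(x)\|\le\varepsilon\|x-a\|$ --- and ``there is an affine map'' is an existential quantifier over finitely many real parameters (the entries of $L$ and $b$), all first-order. Thus first-order differentiability is expressible, the derivative $Df(a)$ is the (unique) such affine map's linear part, and crucially the assignment $a\mapsto Df(a)$ has a \emph{definable graph}. Iterating: $f$ is $C^k$ on $V$ iff $f$ is differentiable at every point of $V$ and the derivative map (whose graph is now a definable subset of $V\times\R^{d(n-d)}$) is again $C^{k-1}$, proceeding by induction on $k$ with the base case $C^0$ = continuity, itself expressible via oscillation.

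\medskip

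\noindent\textbf{Assembling the formula.}
I would then define the smooth locus by quantifying existentially over the dimension $d\in\{0,1,\dots,n\}$ (a finite disjunction, not a genuine quantifier), over the finitely many coordinate projections $\pi$, and over rational/real box parameters, asserting: there is an open box $V\times W$ containing $p$ such that $X\cap(V\times W)$ is the graph of a function $V\to W$ that is $C^k$ in the order-theoretic sense just described. Because every ingredient --- ``open box containing $p$'', ``$X\cap(V\times W)$ is a graph'', ``the graphed function is $C^k$'' --- is a first-order condition over $(\R,<,+,X)$ with real parameters, the whole statement defines the set of $C^k$-smooth points with parameters, which is what the proposition claims (definability allows parameters throughout the paper). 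I expect the main technical obstacle to be the careful inductive bookkeeping in the $C^k$ clause: one must verify that the graph of the derivative map is uniformly definable \emph{with the same base predicate $X$} and that continuity/differentiability of a function given only by its graph (rather than as a term) is correctly captured, handling the quantifier over affine approximants and the interaction of the inductive hypothesis with shrinking the domain $V$. A secondary point to get right is the case $d=0$ (isolated points, vacuously smooth) and $d=n$ (interior points), which should fall out as degenerate instances of the graph formulation.
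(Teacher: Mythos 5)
Your plan founders on a single but fatal point: the formula you propose is not expressible in $(\R,<,+,X)$. To quantify over affine approximants $A(x)=L(x-a)+b$ and assert $\|f(x)-A(x)\|\le\varepsilon\|x-a\|$, you need to evaluate $L(x-a)$ where the entries of $L$ are quantified variables, and you need the comparison $u\le\varepsilon v$ where both $\varepsilon$ and $v=\|x-a\|$ are quantified variables. Both of these are, up to interdefinability, the graph of multiplication of two real variables, which is \emph{not} definable in $(\R,<,+)$ even with arbitrary real parameters: by quantifier elimination for ordered divisible abelian groups, the $(\R,<,+)$-definable functions are piecewise of the form $x\mapsto q_1x_1+\cdots+q_nx_n+c$ with $q_i\in\mathbb{Q}$, so not even scalar multiplication by a fixed irrational is available, let alone a product of two variables. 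If your order-theoretic rendering of differentiability were definable, $(\R,<,+)$ would define the real field. So the ``central obstacle'' you identify is real, and the device you offer to overcome it does not exist; the induction on $k$ never gets off the ground because already the $C^1$ clause is not first-order in this language.

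This is exactly why the paper's proof is structured as a case split rather than a single uniform formula. When $\mathscr{R}$ is \emph{not} of field-type, it shows (via Theorem~\ref{thm:old}(5)) that every $C^k$-smooth point is an affine point, and affine points are definable because being affine is equivalent to the midpoint identity $f\left(\frac{x+y}{2}\right)=\frac{f(x)+f(y)}{2}$, which needs only addition. When $\mathscr{R}$ \emph{is} of field-type, it builds a definable copy of $(\R,<,+,\cdot)$ on an interval whose coordinatizing map is a $C^k$-diffeomorphism on a subinterval, expresses $C^k$-smoothness by a ring formula there (Fact~\ref{fact:defineCk2}), and transports the condition back by definable translations. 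Note also that the resulting formula depends on the case and on the parameters defining the field structure; the proposition does not claim, and the paper does not prove, a single formula uniform in $Y$, which is what your approach would have delivered had it worked. To repair your argument you would essentially have to reinvent this dichotomy.
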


\noindent It is worth re-iterating that we are not assuming $\mathcal{R}$ to be type A.
Let $x\in X$ be a $C^k$-smooth point of $X$. An application of the inverse function theorem shows that, after possibly permuting coordinates, there is $d\in \{0,\dots,n\}$, an open box $V \subseteq \R^d$ and a $C^k$-function $f : V \to \R^{n - d}$ such that $\Gamma(f) = U \cap X$ for some open box $U \subseteq \R^n$.
Note that such an $f$ must be definable.\newline

\noindent We will use the following consequence of the fact that the image of a $C^k$-submanifold under a $C^k$-diffeomorphism is again a $C^k$-submanifold.

\begin{Fact} Let $X,Y \subseteq \R^n$ be definable, let $x\in X$, and let $\tau: X \to Y$ be a definable $C^k$-diffeomorphism. Then $x$ is a $C^k$-smooth point of $X$ if and only if $\tau(x)$ is a $C^k$-smooth point of $Y$.
\end{Fact}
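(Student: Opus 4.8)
The statement is symmetric in $(X,x)$ and $(Y,\tau(x))$: applying the conclusion to the $C^k$-diffeomorphism $\tau^{-1} : Y \to X$ gives the reverse implication. So the plan is to prove only the forward direction, namely that if $x$ is a $C^k$-smooth point of $X$ then $\tau(x)$ is a $C^k$-smooth point of $Y$. Since $\tau$ is a $C^k$-diffeomorphism between the (not necessarily open) sets $X$ and $Y$, I read this in the standard way: $\tau$ and $\tau^{-1}$ each extend locally to ambient $C^k$-maps. Accordingly I fix a $C^k$-map $F$ on an open neighbourhood of $x$ with $F = \tau$ on $X$ near $x$, and a $C^k$-map $G$ on an open neighbourhood of $\tau(x)$ with $G = \tau^{-1}$ on $Y$ near $\tau(x)$. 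Being the restriction of a continuous map with continuous inverse, $\tau$ is in particular a homeomorphism $X \to Y$, a fact I will use at the end.

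By the smoothness hypothesis there is an open $W \ni x$ such that $M := W \cap X$ is a $C^k$-submanifold of $\R^n$, say of dimension $d$. After shrinking $W$ so that $F$ is defined on $W$, so that $F(M)$ lies in the domain of $G$, and so that $G = \tau^{-1}$ holds on all of $\tau(M)$, the relation $\tau^{-1}\circ\tau = \mathrm{id}$ on $X$ gives $G \circ F = \mathrm{id}$ on $M$. The key step is to restrict $F$ to the manifold $M$ and differentiate. Writing $\iota : M \hookrightarrow \R^n$ for the inclusion, we have $G \circ (F|_M) = \iota$ near $x$, so the chain rule yields $dG_{\tau(x)} \circ d(F|_M)_x = d\iota_x$. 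The right-hand side is the injection $T_x M \hookrightarrow \R^n$, hence $d(F|_M)_x : T_x M \to \R^n$ is injective; that is, $F|_M$ is a $C^k$-immersion at $x$.

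Now I invoke the fact that a $C^k$-immersion is a local $C^k$-embedding: there is an open neighbourhood $M_0$ of $x$ in $M$ on which $F|_{M_0}$ is a $C^k$-embedding, so that its image $F(M_0)$ is a $C^k$-submanifold of $\R^n$. This is precisely where the ambient fact quoted before the statement enters, in the guise that the embedded image of a $C^k$-submanifold is again a $C^k$-submanifold. It remains to identify $F(M_0)$ with $Y$ near $\tau(x)$. Since $F = \tau$ on $X \supseteq M$, we get $F(M_0) = \tau(M_0)$; and $M_0$, being open in $M$ and hence open in $X$, is carried by the homeomorphism $\tau$ to a set that is open in $Y$, so $\tau(M_0) = W' \cap Y$ for some open $W' \ni \tau(x)$. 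Therefore $W' \cap Y = F(M_0)$ is a $C^k$-submanifold, and $\tau(x)$ is a $C^k$-smooth point of $Y$, as desired.

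The crux of the argument, and the only genuinely non-formal step, is the passage from ``$\tau$ is a diffeomorphism of the subsets $X,Y$'' to ``$F|_M$ is an immersion,'' effected by the derivative identity $dG_{\tau(x)} \circ d(F|_M)_x = d\iota_x$. I expect the main fiddliness to lie in the bookkeeping of neighbourhood shrinkings, ensuring that $F$ and $G$ are simultaneously defined, that $F(M)$ lands in the domain of $G$, and that $G$ and $\tau^{-1}$ coincide on $\tau(M)$ so that $G\circ F = \mathrm{id}$ holds on a single neighbourhood of $x$ in $M$; and in pinning down the adopted meaning of ``$C^k$-diffeomorphism of arbitrary subsets'' through local ambient extensions. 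Once the immersion is in hand, the local-embedding theorem together with the homeomorphism property of $\tau$ completes the proof routinely.
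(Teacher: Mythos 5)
Your proof is correct, and it does genuinely more than the paper, which offers no argument at all: the statement is labelled a Fact and justified only by the preceding sentence, namely that it is ``a consequence of the fact that the image of a $C^k$-submanifold under a $C^k$-diffeomorphism is again a $C^k$-submanifold.'' That one-line justification is adequate for the way the Fact is actually used later in the section, because in both applications ($g_x$, a translation between open balls, and $\tau_m$, a diffeomorphism between open boxes) the map $\tau$ is the restriction of a $C^k$-diffeomorphism between \emph{open} subsets of $\R^n$; in that situation one shrinks $W$ into the domain, uses injectivity on the ambient open set to get $\tau(W\cap X)=\tau(W)\cap Y$ with $\tau(W)$ open, and quotes the standard principle. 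You instead take the hypothesis at face value: $\tau$ is a diffeomorphism only of the subsets $X$ and $Y$, read via local ambient $C^k$ extensions $F$ and $G$. This stronger, intrinsic version really does need the extra idea you supply, since a subset-diffeomorphism need not extend to an ambient diffeomorphism, so the quoted principle cannot be applied directly; your chain-rule identity $dG_{\tau(x)}\circ d(F|_M)_x=d\iota_x$, which makes $F|_M$ an immersion at $x$, followed by the local embedding theorem and the observation that the homeomorphism $\tau$ carries the relatively open set $M_0$ onto a relatively open subset of $Y$, closes exactly the gap between the two readings. Your bookkeeping is also sound: continuity of $\tau$ lets you shrink $W$ so that $\tau(M)$ lands in the relative neighbourhood of $\tau(x)$ on which $G$ agrees with $\tau^{-1}$, which is what legitimizes $G\circ F=\mathrm{id}$ on $M$. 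The only hypothesis worth flagging is that differentiation requires $k\geq 1$; this is harmless here, as the section in which the Fact appears fixes $k\geq 2$. In short: the paper's approach buys brevity and suffices for its ambient-restriction applications, while yours proves the Fact as literally stated, for diffeomorphisms of arbitrary definable subsets.
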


\subsection{When $\mathcal{R}$ is not field-type} Suppose that $\mathcal{R}$ is not field-type. We now give a proof of Proposition \ref{prop:defineck} in this case.
We need the following fact, which is a basic analysis exercise.


\begin{Fact}\label{fact:convex}
Let $f : W \to \R^m$ be a continuous function on a convex subset $W$ of $\R^d$.
Then $f$ is affine if and only if
$$ f \left( \frac{ x + y }{2} \right) = \frac{ f(x) + f(y) } {2} \quad \text{for all } x,y \in W. $$
\end{Fact}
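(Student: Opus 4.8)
The plan is to prove Fact~\ref{fact:convex}, the elementary characterization of affine functions via the midpoint condition. Recall that $f : W \to \R^m$ is \emph{affine} means $f(x) = L(x) + c$ for some linear map $L : \R^d \to \R^m$ and constant $c \in \R^m$. The forward direction is trivial: if $f(x) = L(x) + c$, then by linearity of $L$ we have $f\bigl(\tfrac{x+y}{2}\bigr) = L\bigl(\tfrac{x+y}{2}\bigr) + c = \tfrac{1}{2}L(x) + \tfrac{1}{2}L(y) + c = \tfrac{1}{2}(L(x)+c) + \tfrac{1}{2}(L(y)+c) = \tfrac{f(x)+f(y)}{2}$, so only the converse requires work.

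For the converse, I would first reduce to the scalar case $m = 1$ by working coordinatewise, since $f$ satisfies the midpoint condition if and only if each of its component functions does, and $f$ is affine if and only if each component is. Next, after composing with a translation I may assume $f(x_0) = 0$ for some fixed basepoint $x_0 \in W$; it then suffices to show the translated function is \emph{linear} in the affine sense (i.e.\ agrees with a linear functional on $W$). The heart of the argument is a standard bootstrapping: the midpoint (Jensen) identity $f\bigl(\tfrac{x+y}{2}\bigr) = \tfrac{f(x)+f(y)}{2}$ forces $f$ to be \emph{midpoint-affine}, and one shows by induction that this propagates to all dyadic rational convex combinations, i.e.\ $f\bigl(\sum \lambda_i p_i\bigr) = \sum \lambda_i f(p_i)$ whenever the $\lambda_i$ are dyadic rationals in $[0,1]$ summing to $1$ and the $p_i$ lie in $W$. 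The key subtlety is that this is a purely algebraic consequence of the midpoint law and requires no regularity hypothesis, so one cannot invoke continuity to pass from dyadic to all reals.

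The main obstacle is precisely this last point: promoting dyadic-rational affinity to genuine affinity. The cleanest route is to avoid needing full continuity by exploiting the structure of $\R^d$ directly. Fix an affine basis $x_0, x_0 + e_1, \dots, x_0 + e_d$ of a small simplex inside the open box $W$ (possible since $W$ is open), and define the candidate affine map $g$ to be the unique affine function agreeing with $f$ at these $d+1$ points. Then $h := f - g$ satisfies the midpoint condition, vanishes at the $d+1$ basis points, and I must show $h \equiv 0$ on $W$. Using midpoint-affinity, $h$ vanishes at every dyadic-rational affine combination of the basis points, and these are dense in $W$. Here one genuinely needs one regularity-free trick: the Cauchy-type functional equation $h(\tfrac{x+y}{2}) = \tfrac{h(x)+h(y)}{2}$ with $h = 0$ on a dense set of the convex hull, combined with the fact that an open box is convex and that along any line segment the restriction of $h$ is a midpoint-affine function of one real variable vanishing on a dense set of dyadics.

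For the one-dimensional core I would argue as follows: restrict $h$ to a segment, reparametrize to get $\phi : [0,1] \to \R$ with $\phi(\tfrac{s+t}{2}) = \tfrac{\phi(s)+\phi(t)}{2}$ and $\phi = 0$ on the dyadic rationals in $[0,1]$. The identity $\phi(\tfrac{s+t}{2}) = \tfrac{\phi(s)+\phi(t)}{2}$ gives $\phi(s) + \phi(t) = 2\phi(\tfrac{s+t}{2})$, and iterating yields, for any $t$ and any dyadic $r$, the reflection relation $\phi(2r - t) = 2\phi(r) - \phi(t) = -\phi(t)$ when $\phi(r)=0$; choosing dyadics $r$ on both sides of $t$ pins down $\phi(t)$. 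Concretely, for arbitrary $t \in (0,1)$ pick dyadic rationals $r_1 < t < r_2$ with $t$ their midpoint, so $\phi(t) = \tfrac{\phi(r_1)+\phi(r_2)}{2} = 0$; since every real in $(0,1)$ is the midpoint of two dyadics, $\phi \equiv 0$. This completes the segment argument, hence $h \equiv 0$ on $W$, hence $f = g$ is affine, finishing the proof. I expect the bookkeeping of the dyadic induction and the midpoint-to-dense-set density claim to be the only places demanding care, but each is routine analysis.
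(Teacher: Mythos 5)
The paper offers no proof of Fact~\ref{fact:convex} (it is explicitly left as ``a basic analysis exercise''), so your attempt can only be measured against the standard argument the authors presumably intend: midpoint-affinity propagates to dyadic convex combinations, and one then uses \emph{continuity} to pass from dyadic coefficients to arbitrary real ones. You prove the easy direction correctly and the dyadic bootstrapping is fine, but the final step of your converse fails. You claim that every $t \in (0,1)$ is the midpoint of two dyadic rationals $r_1 < t < r_2$; this is false, because the midpoint of two dyadic rationals is again a dyadic rational, so no non-dyadic $t$ arises this way. Likewise the reflection identity $\phi(2r - t) = 2\phi(r) - \phi(t)$ only relates values of $\phi$ at points that differ by dyadic rationals, and can never pin down $\phi(t)$ for non-dyadic $t$.

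This is not a repairable bookkeeping slip. The statement you isolate as the one-dimensional core --- a midpoint-affine function vanishing on a dense set must vanish identically, with no regularity hypothesis --- is false: a $\mathbb{Q}$-linear but not $\R$-linear map $\phi : \R \to \R$ built from a Hamel basis containing $1$, normalized so that $\phi(1) = 0$, satisfies the midpoint identity, vanishes on the dense set $\mathbb{Q}$, yet is not affine on any interval. (The same example shows the converse direction of Fact~\ref{fact:convex} is literally false for arbitrary functions; it must be read with a regularity hypothesis such as continuity, which is available in the paper's applications, where $f$ is a definable $C^k$ function.) You correctly note that one ``cannot invoke continuity to pass from dyadic to all reals,'' but the right conclusion from that observation is that some regularity has to be assumed: once $f$ is continuous, dyadic affinity on segments plus density finishes the argument in one line, and that is the intended exercise.
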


\begin{proof}[Proof of Proposition \ref{prop:defineck} when $\mathcal{R}$ is not of field-type]
Let $X\subseteq \R^n$. We show that the set of $C^k$-smooth points of $X$ is equal to the set of affine points of $X$, and that the set of affine points of $X$ is definable. The latter statement follows immediately from Fact \ref{fact:convex}.
\newline

\noindent It is enough to show that every $C^k$-smooth point of $X$ is an affine point of $X$. Let $x\in X$ be a $C^k$-smooth point of $X$. Thus there is $d\in \{0,\dots,n\}$, open boxes $U\subseteq \R^n$ and $V\subseteq \R^d$ and a definable $C^k$-function $f : V \to \R^{n-d}$ such that after permuting coordinates if necessary, $x\in U$ and $\Gamma(f)=U\cap X$. Then $f$ is affine by Fact \ref{thm:old}(5), and hence $x$ is an affine point of $X$.\end{proof}

\subsection{When $\mathcal{R}$ is field-type} We now treat the case when $\mathcal{R}$ is field-type.
The following Fact~\ref{fact:defineCk2} follows using the usual definition of differentiability.

\begin{Fact}\label{fact:defineCk2}
Let $\mathcal{L}$ be the language of ordered rings, let $P$ be an $n$-ary predicate, and let $\mathcal{L}_P$ be the language $\mathcal{L}\cup \{P\}$. Then there is an $\mathcal{L}_P$-formula $\varphi(x)$ with $x = (x_1,\ldots,x_n)$ such that for every subset $X$ of $\R^n$
\[
\{ b \in \R^n : (\R,<,+,\cdot,X) \models \varphi(b) \}
\]
is the set of $C^k$-smooth points of $X$.
\end{Fact}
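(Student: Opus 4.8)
The plan is to write down, explicitly, a first-order $L_P$-formula encoding the geometric characterization of $C^k$-smooth points supplied by the inverse function theorem and recalled at the start of this section: a point $x$ on $X$ is $C^k$-smooth precisely when, after permuting coordinates by some $\sigma$ and choosing some $d \in \{0,\dots,n\}$, some open box $U \ni x$ meets $X$ in the graph of a $C^k$-function $f$ from an open box $V \subseteq \R^d$ into $\R^{n-d}$. Since there are only finitely many values of $d$ and finitely many coordinate permutations $\sigma$, the formula $\varphi$ will be a finite disjunction over these choices, and for each choice I existentially quantify over a real parameter $\varepsilon > 0$ describing the box $U = B_\varepsilon(x)$. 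Within each disjunct I assert, using $P$, that $U \cap X$ projects bijectively onto $V := \pi_\sigma(U)$ and is exactly the graph of the resulting function $f$; this is first-order, since ``for every $v \in V$ there is a unique $y$ with the corresponding point in $U \cap X$'' is directly expressible with $P$, $<$, and the field operations. Because the order relation is ring-definable over $\R$, using $<$ and the $\ell_\infty$-norm throughout is harmless.

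The heart of the matter is expressing ``$f$ is $C^k$'' by a first-order $L_P$-formula, given that $f$ is available only through $P$ and the graph relation. Here I would exploit that over the real field differentiability is an $\varepsilon$--$\delta$ condition and hence first-order. For each multi-index $\alpha$ with $|\alpha| \le k$ I define, by recursion on $|\alpha|$, an $L_P$-formula $D_\alpha(v,c)$ asserting ``the partial derivative $\partial^\alpha f$ exists at $v$ and equals $c$''. For $\alpha = 0$, $D_0(v,c)$ simply says $(v,c) \in \Gamma(f)$; for $\alpha = \beta + e_i$, set
\[
D_\alpha(v,c) \; :\equiv \; \forall \varepsilon > 0 \, \exists \delta > 0 \, \forall h \, \forall c_0 \, \forall c_1 \Big[ \big(0 < |h| < \delta \wedge D_\beta(v,c_0) \wedge D_\beta(v + h e_i, c_1)\big) \to \Big\| \tfrac{c_1 - c_0}{h} - c \Big\| < \varepsilon \Big].
\]
Since $k$ is fixed, there are finitely many such $\alpha$ and the recursion terminates after $k$ steps, so each $D_\alpha$ is a genuine first-order $L_P$-formula.

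Finally I would assemble ``$f$ is $C^k$ on $V$'' as the conjunction, taken in order of increasing $|\alpha|$ over all $\alpha$ with $|\alpha| \le k$, of the assertions that $\partial^\alpha f$ exists on $V$, namely $\forall v \in V \, \exists c \, D_\alpha(v,c)$, and that $\partial^\alpha f$ is continuous on $V$, namely
\[
\forall v \in V \, \forall \varepsilon > 0 \, \exists \delta > 0 \, \forall v' \in V \, \forall c \, \forall c' \Big[ \big( \| v - v' \| < \delta \wedge D_\alpha(v,c) \wedge D_\alpha(v',c') \big) \to \| c - c' \| < \varepsilon \Big].
\]
By the standard characterization of $C^k$ functions, this conjunction holds exactly when $f \in C^k(V)$; substituting it into each disjunct produces the desired $\varphi$, whose correctness reduces to the inverse-function-theorem description of $C^k$-smooth points. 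I expect the main obstacle to be bookkeeping rather than mathematical content: one must check that the recursively built $D_\alpha$ faithfully track iterated partials when $f$ is presented only through $P$. The subtle point is that $D_\alpha(v,c)$ behaves correctly only once the lower-order derivatives $\partial^\beta f$ are known to exist throughout a neighbourhood of $v$, so that the values $c_0,c_1$ in the recursion are pinned down; arranging the final conjunction in order of increasing $|\alpha|$, so that each level is evaluated under the standing guarantee that all lower-order partials are total on $V$, is exactly what makes the construction go through.
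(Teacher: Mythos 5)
The paper offers no proof to compare against: Fact~\ref{fact:defineCk2} is explicitly left as an exercise to the reader. Your strategy is the intended one: reduce ``$p$ is a $C^k$-smooth point'' to the local graph description (finite disjunction over $d$ and over coordinate permutations, graph condition expressed via $P$), and then encode ``$f$ is $C^k$'' by first-order $\varepsilon$--$\delta$ formulas for the iterated difference quotients and their continuity. The recursion defining $D_\alpha$ is sound, and your worry about vacuous satisfaction of $D_\alpha$ when lower-order partials fail to exist is correctly dispatched: since the final formula is a \emph{conjunction} over all $|\alpha|\le k$, whenever it holds the lower-order conjuncts guarantee (by induction on $|\alpha|$) that each $D_\beta(v,\cdot)$ has a unique witness on all of $V$, which is exactly what makes the level-$|\alpha|$ clauses non-vacuous and correct; the ``order of evaluation'' plays no logical role beyond this induction.

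There is, however, one concrete misstep: taking $U=B_\varepsilon(x)$ to be a \emph{cube} and demanding that $U\cap X$ project bijectively onto the full face $\pi_\sigma(U)$. With equal side lengths in the base and fibre directions this condition can fail at every scale and for every coordinate permutation, even for a linear subspace. For example, let $X=\{(x,y,z)\in\R^3: z=x+y\}$ and $U=(-\varepsilon,\varepsilon)^3$; then for each of the three coordinate $2$-planes the projection of $U\cap X$ is a proper hexagonal subset of $(-\varepsilon,\varepsilon)^2$, so no disjunct of your $\varphi$ is satisfied at the origin, although every point of $X$ is $C^k$-smooth (indeed affine). Thus $\varphi$ as written defines a proper subset of the $C^k$-smooth points for some $X$, contradicting the statement of the Fact. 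The repair is routine: existentially quantify over a general open box $U=\prod_i(a_i,b_i)$ containing $x$ (that is, over $2n$ real parameters rather than one), or over two separate radii $\varepsilon,\varepsilon'$ for the base and fibre directions; the forward direction of the inverse-function-theorem characterization then goes through by shrinking the base box until $f$ maps it into the chosen fibre box. With that change the argument is complete.
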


\begin{proof}[Proof of Proposition \ref{prop:defineck} when $\mathcal{R}$ is field-type]
We will first show that there are open intervals $I$ and $J$, definable functions $\oplus, \otimes : I^2 \to I$ and an isomorphism $\tau : I \to \R$ between $(I,<,\oplus,\otimes) \to (\R,+,\cdot)$ such that $J\subseteq I$ and the restriction of $\tau$ to $J$ is a $C^k$-diffeomorphism. By Theorem \ref{thm:old}.2, we only have to consider the case that $\mathcal{R}$ is type A or type C, because $\mathcal{R}$ is field-type.
The type A case is shown in \cite[Lemma 6.2]{HW-continuous}. \newline

\noindent Consider the case that $\mathcal{R}$ is type C. In this situation, $\mathcal{R}$ defines all bounded Borel sets. Set
\begin{align*}
\tau : (-2,2) &\to \R\\
x &\mapsto \left\{\begin{array}{lr}
        -\frac{1}{x+2}, & -2\leq x< -1\\
        x, & -1\leq n\leq 1\\
        \frac{1}{2-x}, & 1< n\leq 2
        \end{array}\right.
\end{align*}
Since $\mathcal{R}$ defines all bounded Borel sets, it is not hard see that $\mathcal{R}$ defines functions $\oplus$ and $\otimes$ such that $\tau$ is an isomorphism between $((-2,2),<,\oplus, \otimes)$ and \linebreak $(\R,+,\cdot)$. Now set $J:=(-1,1)$ and observe that the restriction of $\tau$ to $J$ is a $C^k$-diffeomorphism.\newline


\noindent  Set $L:=\tau(J)$. Let $\tau_m : J^m \to L^m$ be given by
$$ \tau_m (x_1,\ldots,x_m) = (\tau(x_1), \ldots, \tau(x_m)) \quad \text{for all} \quad x_1,\ldots,x_m \in J. $$
Note that $\tau_m$ is a $C^k$-diffeomorphism.\newline

\noindent Let $(Z_a)_{a \in S}$ be a definable family of subsets of $J^m$. We now show that the sets of $C^k$-smooth points of this family are uniformly definable. By Fact \ref{fact:defineCk2} there is a definable family $(Y_a)_{a\in S}$ such that
\[
Y_a := \{ z \in Z_a \ : \ \tau_m(z) \text{ is a $C^k$-smooth point of $\tau_m(Z_a)$}\}.
\]
Since $\tau_m$ is a $C^k$-diffeomorphism, we get that $Y_a$ is the set of $C^k$-smooth points of $Z_a$. \newline

%
%
%
%
%

\noindent Let $X\subseteq \R^n$ be definable. We are now ready to prove the definability of the set of $C^k$-smooth points of $X$. Fix $u \in J^m$ and $\varepsilon >0$ such that $B_{\varepsilon}(u)\subseteq J^m$. For $x\in X$ and $\delta >0$ with $\delta < \varepsilon$, we let  $g_{x} : B_{\delta}(x) \to B_{\delta}(u)$ be the $C^{\infty}$-diffeomorphism defined by
\[
y \mapsto u + (y-x).
\]
Now consider the set
\[
Y:= \{ x \in X \ : \ \exists \delta >0 \ \delta < \varepsilon \wedge u \text{ is a $C^k$-smooth point of } g_x(B_{\delta}(x)\cap X)\}.
\]
Observe that the $Y$ is definable by the argument in the previous paragraph. We finish the proof by establishing that $Y$ is the set of $C^k$-smooth points of $X$.\newline

\noindent Let $x\in X$ be a $C^k$-smooth point of $X$. Take $\delta \in(0,\varepsilon)$. Then $x$ is also a $C^k$-smooth point of $X\cap B_{\delta}(x)$. Since $g_x$ is a $C^{\infty}$-diffeomorphism, we have that $g_x(x)$ is a $C^k$-smooth point of $g_x(B_{\delta}(x)\cap X)$. Because $g_x(x)=u$, we have that $x\in Y$.\newline

\noindent Let $x \in Y$. 
Let $\delta \in (0,\varepsilon)$ be such that $u$ is a $C^k$-smooth point of $g_x(B_{\delta}(x)\cap X)$. Since $g_x(x)=u$ and $g_x$ is a $C^{\infty}$-diffeomorphism, we deduce that $x$ is a $C^k$-smooth point of $X\cap B_{\delta}(x)$ and hence a $C^k$-smooth point of $X$.
\end{proof}

\section{Type A expansions without dimension coincidence}
\label{subsection:dimension-failure}
\noindent
We now discuss two examples of type A expansions in which dimension coincidence fails.
First we present a natural family of type A expansions in which topological and Assouad dimensions do not agree, and then we describe a type A structure that defines a compact nowhere dense subset of $\R$ with positive Hausdorff dimension.

\subsection{$(\R,<,+,\Z)$ and related structures}
\label{section:top-assouad}
Perhaps the most example natural of a subset with topological dimension zero and positive Assouad dimension is $\Z$, which has Assouad dimension one.
More generally, $\Z^m$ has topological dimension zero and Assouad dimension $m$.
The structure $(\R,<,+,\Z)$ is well-known to be tame.
Miller~\cite{ivp} and Weispfenning~\cite{weis} independently showed that $(\R,<,+,\Z)$ has quantifier elimination in a natural expanded language.
It follows that $(\R,<,+,\Z)$ is locally o-minimal and NIP. Hence it is an example of type A expansions that does not define an isomorphic copy of $\mathcal{B}$. Similarly, Marker and Steinhorn showed in unpublished work that $(\R,<,+,\sin)$ is locally o-minimal, see Toffalori and Vozoris \cite{TV-local}.
Observe that $(\R,<,+,\sin)$ defines $\pi\Z$, a set of topological dimension zero and Assouad dimension one.
These two examples belong to a larger family of structures.

\medskip\noindent
Given $\alpha > 0$, let $+_\alpha :[0,\alpha)^2 \to [0,\alpha)$ be given by 
\[
t +_\alpha t^* := \begin{cases}
			 t + t^*, & \text{if $t + t^* < \alpha$}\\
            t + t^* - \alpha, & \text{otherwise.}
		 \end{cases}
\]
The following fact follows from Kawakami et al. \cite[Theorem 25]{MR2965427}.

\begin{fact}
\label{fact:az}
The following are equivalent:
\begin{enumerate}
\item $\Cal R$ is interdefinable with $(\R,<,+,\Cal C, \alpha\Z)$ for some $\alpha > 0$ and collection $\Cal C$ of bounded subsets of euclidean space such that $(\R,<,+,\Cal C)$ is o-minimal.
\item The reduct $\Cal R^*$ of $\Cal R$ generated by all bounded definable sets is o-minimal and $\Cal R$ is interdefinable with $(\Cal R^*,\alpha\Z)$ for some $\alpha > 0$.
\item $\Cal R$ is interdefinable with $(\Cal S,\alpha\Z)$ for some $\alpha > 0$ and o-minimal expansion $\Cal S$ of $(\R,<,+)$ such that $(\Cal S,\alpha\Z)$ is locally o-minimal.
\item There is $\alpha > 0$ and an o-minimal expansion $\Cal O$ of $([0,\alpha),<,+_\alpha)$ such that every definable subset of $\R^m$ is a finite union of sets of the form $X + Y$ for $\Cal O$-definable $X \subseteq [0,\alpha)^m$ and $(\alpha\Z,<,+)$-definable $Y \subseteq (\alpha\Z)^m$.
\end{enumerate}
\end{fact}

\noindent
For example, $(\R,<,+,\sin)$ is interdefinable with $(\R,<,+,\sin|_{[0,\pi]}, \pi\Z)$,  and \linebreak $(\R,<,+,\sin|_{[0,\pi]})$ is o-minimal.
More generally, if $f : \R \to \R$ is analytic and periodic with period $\alpha > 0$,  then $(\R,<,+,f)$ is interdefinable with $(\R,<,+,f|_{[0,\alpha]},\alpha\Z)$ and $(\R,<,+,f|_{[0,\alpha]})$ is o-minimal. Hence $(\R,<,+,f)$ satisfies (1)-(4) above.

\medskip\noindent
Suppose that $\Cal R$ satisfies one of the the equivalent conditions of Fact~\ref{fact:az}.
We assume that $\alpha = 1$ for the sake of simplicity.
Item (4) shows that $\Cal R$ is bi-interpretable with the disjoint union of an o-minimal expansion of $([0,1),<,+_1)$ and $(\Z,<,+)$.
In particular, every $\Cal R$-definable subset of $\Z^m$ is definable in $(\Z,<,+)$.
There is a canonical dimension on $(\Z,<,+)$-definable sets which we denote by $\dim_\Z$, see Cluckers~\cite{cluckers-presburger}.
Suppose that $X$ is a definable subset of $\R^m$.
We define the global dimension
\[
\dim_\mathrm{Global} X = \dim_\Z \{ b \in \Z^m : (b + [0,1)^m) \cap X \ne \emptyset \}
\]
and the local dimension
\[
\dim_\mathrm{Local} X = \max \{ \dim X \cap (b + [0,1)^m) : b \in \Z^m \}
\]
of $X$.
It is easy to see that the topological, packing, and Hausdorff dimensions of $X$ agree with $\dim_\mathrm{Local} X$.
One can also use the theory of weak tangents to show that the Assouad dimension of $X$ agrees with $\max \{ \dim_\mathrm{Local} X, \dim_\mathrm{Global} X \}$, see Fraser \cite[5.1]{Fraser2020} for a discussion of weak tangents.

\subsection{Topological and Hausdorff}
\label{section:top-hausdorff}
Whenever $\mathcal{R}$ is type B, we know that $\mathcal{R}$ defines an isomorphic copy of $\mathcal{B}$ by Fact \ref{thm:old}(1). In this section we show that in Statement (ii) of Theorem B the statement ``defines an isomorphic copy of $\mathcal{B}$'' cannot be replaced by the stronger statement ``is type B'', even under the stronger assumption that $\dim X < \dim_{\text{Hausdorff}} X$. We do so by giving an example of a type A expansion that defines a compact subset of $\mathbb{R}$ with topological dimension 0 and positive Hausdorff dimension, and hence positive packing dimension.
Our construction is based on the following reformulation of a result by Friedman and Miller~\cite[Theorem A]{FM-sparse}.

\begin{Fact}  
\label{fact:FM}
Let $\mathcal{M}$ be an o-minimal expansion of $(\R,<,+)$, and let $E \subseteq \mathbb{R}$ be closed and nowhere dense.
Then the following are equivalent:
\begin{enumerate}
\item $(\mathcal{M},E)$ is type A,
\item every $(\mathcal{M},E)$-definable subset of $\mathbb{R}$ has interior or is nowhere dense,
\item $f(E^n)$ is nowhere dense for every $\mathcal{M}$-definable $f : \mathbb{R}^n \to \mathbb{R}$.
\end{enumerate}
\end{Fact}

\begin{proof}
By \cite[Theorem A]{FM-sparse} we know that (3) implies (2).
A dense $\omega$-orderable set is dense and co-dense in some nonempty open interval. Therefore (2) implies (1).
It follows from Fact~\ref{prop:project1} that (1) implies (3).
\end{proof}

\noindent We also need the following immediate consequence of semi-linear cell decomposition (see van den Dries \cite[Corollary 1.7.8]{Lou}) for $(\R,<,+)$-definable sets.

\begin{Fact}\label{lem:semilinear}
Let $E \subseteq \mathbb{R}$.
The following are equivalent:
\begin{enumerate}
\item $f(E^n)$ is nowhere dense for every $(\R,<,+)$-definable $f : \mathbb{R}^n \to \mathbb{R}$,
\item $T(E^n)$ is nowhere dense for every $\mathbb{Q}$-linear $T : \mathbb{R}^n \to \mathbb{R}$.
\end{enumerate}
\end{Fact}

\noindent We now characterize compact nowhere dense subsets $E$ of $\mathbb{R}$ such that \linebreak $(\R,<,+,E)$ is type A.

\begin{Thm}\label{thm:Eclass}
Let $E \subseteq \mathbb{R}$ be compact and nowhere dense.
Then the following are equivalent:
\begin{enumerate}
\item $(\R,<,+,E)$ is type A,
\item every $(\R,<,+,E)$-definable subset of $\mathbb{R}$ has interior or is nowhere dense,
\item $T(E^n)$ is nowhere dense for every $\mathbb{Q}$-linear $T : \mathbb{R}^n \to \mathbb{R}$,
\item the subgroup of $(\mathbb{R},+)$ generated by $E$ is not equal to $\mathbb{R}$.
\end{enumerate}
\end{Thm}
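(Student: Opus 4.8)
The plan is to prove the cycle of implications $(1) \Leftrightarrow (2)$, $(2) \Rightarrow (3)$, and then close the loop by proving $(3) \Leftrightarrow (4)$ together with $(3) \Rightarrow (1)$, leaning heavily on the o-minimal machinery already assembled. The equivalence $(1) \Leftrightarrow (2)$ and the implication $(2) \Rightarrow (3)$ are essentially free: taking $\mathscr{M} = (\R,<,+)$ in Proposition~\ref{fact:FM}, conditions $(1)$, $(2)$, $(3)$ of that proposition specialize to conditions $(1)$, $(2)$, and the statement ``$f(E^n)$ is nowhere dense for every $(\R,<,+)$-definable $f$'' here. By Lemma~\ref{lem:semilinear}, that last statement is equivalent to condition $(3)$ of the theorem. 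So Proposition~\ref{fact:FM} combined with Lemma~\ref{lem:semilinear} immediately yields $(1) \Leftrightarrow (2) \Leftrightarrow (3)$, and the only genuinely new work is the equivalence of these with the algebraic condition $(4)$.

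First I would record the straightforward direction $\neg(4) \Rightarrow (3)$, i.e. if the group generated by $E$ is all of $\R$, then $(3)$ fails. Assuming the subgroup $G$ generated by $E$ equals $\R$, every real number is a finite $\Z$-linear combination of elements of $E$; since $E$ is compact there is a uniform bound on any fixed-length combination only up to scaling, so one packages this as follows: $G = \bigcup_n T_n(E^n)$ where $T_n : \R^n \to \R$ sends $(x_1,\dots,x_n)$ to $x_1 + \cdots + x_n$ applied to the symmetrized set $E \cup (-E)$ (a harmless enlargement, still compact nowhere dense, and a $\Q$-linear image of a power of $E$). Since $G = \R$ has interior, the Baire category theorem forces some $T_n(E^n)$ to be somewhere dense, contradicting $(3)$. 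Care is needed to phrase this using only $\Q$-linear maps on powers of $E$ itself rather than of $E \cup (-E)$, but absorbing the sign into the $\Q$-coefficients handles that.

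The substantive direction is $(4) \Rightarrow (3)$: if $G \neq \R$, then $T(E^n)$ is nowhere dense for every $\Q$-linear $T$. The key structural fact I would use is that each $T(E^n)$ is a compact (hence closed) subset of $G$, because $T$ with rational coefficients maps the group generated by $E$ into itself and $E^n$ is compact. Thus every $T(E^n)$ is contained in the proper subgroup $G \subsetneq \R$. A proper subgroup of $(\R,+)$ containing no interval is automatically nowhere dense if it is itself nowhere dense, but a subgroup is either dense or nowhere dense in $\R$; so the crux is to rule out $G$ being \emph{dense}. Here is where the compactness and zero-dimensionality of $E$ must be exploited: a dense proper subgroup generated by a compact set leads to a contradiction because one can show $\bigcup_n T_n(E^n)$ (over the enumeration of $\Q$-linear combinations) would then be an $F_\sigma$ dense subset, and combined with the fact that $E$ is nowhere dense and the images are compact, a Baire-category argument inside $\cl(G) = \R$ yields interior in some $T(E^n)$, contradicting $G \neq \R$.

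The main obstacle I anticipate is precisely controlling the dichotomy ``$G$ dense vs. nowhere dense'' and converting density of the \emph{group} into somewhere-density of a \emph{single} image $T(E^n)$, since density of $G = \bigcup_n T_n(E^n)$ only gives that the countable union is dense, not any individual term. Overcoming this requires a category argument: if $G$ is dense, then since each $T_n(E^n)$ is compact and their union $G$ is a dense $F_\sigma$ with empty interior (as $G \neq \R$), one contradicts the Baire category theorem applied to the closed sets $T_n(E^n)$ covering the complete space $\R = \cl(G)$ only if one first knows $\cl(G) = \R$, which density provides. Thus the clean statement to aim for is: $G$ is nowhere dense if and only if $G \neq \R$, after which every $T(E^n) \subseteq G$ is nowhere dense, giving $(3)$. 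Finally $(3) \Rightarrow (1)$ is immediate from the already-established $(3) \Rightarrow (2) \Rightarrow (1)$, closing the equivalence.
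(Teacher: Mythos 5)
Your reduction of the equivalence of (1), (2), and (3) to Proposition~\ref{fact:FM} and Lemma~\ref{lem:semilinear} is exactly the paper's route, and your Baire-category argument that $\neg(4)$ forces some integer-linear image of a power of $E$ to be somewhere dense (equivalently, that (3) implies (4)) also matches the paper. The gap is in your argument for $(4)\Rightarrow(3)$.

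The ``clean statement'' you aim for --- that the subgroup $G$ generated by $E$ is nowhere dense if and only if $G \neq \R$ --- is false. Take $E = \{0,1,\sqrt{2}\}$: it is compact and nowhere dense, $G = \Z + \sqrt{2}\,\Z$ is a dense proper subgroup of $(\R,+,0)$, and yet (3) holds for this $E$ (every $T(E^n)$ is finite). So no proof of $(4)\Rightarrow(3)$ can go through showing that $G$ is nowhere dense; you must show each individual $T(E^n)$ is nowhere dense even when $G$ is dense. The Baire argument you propose to rule out density of $G$ also does not close: $G = \bigcup_n T_n(E^n)$ is merely a dense $F_\sigma$, not a closed (hence not a complete) subspace of $\R$, so covering $G$ --- rather than $\R$ --- by countably many compacta yields no term with interior. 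A smaller but real issue: the containment $T(E^n) \subseteq G$ fails for general $\mathbb{Q}$-linear $T$, since $G$ consists of $\Z$-linear combinations of elements of $E$ and need not be divisible; one must first multiply by an integer clearing denominators. The correct mechanism, and the one the paper uses, is to prove the contrapositive $\neg(3)\Rightarrow\neg(4)$ and to apply compactness to a single image rather than to $G$: if some $T(E^n)$ is somewhere dense, then $T(E^n)$ is compact (a continuous image of the compact set $E^n$), hence closed, hence has nonempty \emph{interior}; choosing $m \in \Z$ with $mq_i \in \Z$ for all coefficients $q_i$ of $T$, the set $mT(E^n)$ lies in $G$ and still has interior, and a subgroup of $(\R,+,0)$ containing a set with nonempty interior is all of $\R$. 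That upgrade from ``somewhere dense'' to ``has interior'' via closedness is where the hypothesis that $E$ is compact does its work.
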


\begin{proof}
The equivalence of (1), (2), and (3) follows immediately from Fact~\ref{fact:FM} and Fact~\ref{lem:semilinear}.
We show that (3) and (4) are equivalent.\newline

\noindent Suppose (3) holds. For $u = (u_1,\ldots,u_n) \in \mathbb{Z}^n$, set
\[
E_{u} := \{ u_1 e_1 + \ldots + u_n e_n \ : \ e_1,\ldots,e_n \in E \}.
\]
Then $\bigcup_{n > 0} \bigcup_{u \in \mathbb{Z}^n} E_{u}$ is the subgroup of $(\mathbb{R},+)$ generated by $E$.
Since $T(E^n)$ is nowhere dense for all $\mathbb{Q}$-linear $T : \mathbb{R}^n \to \mathbb{R}$, we have that $E_{u}$ is nowhere dense for every $u\in \Z^n$. Thus $\bigcup_{n > 0} \bigcup_{u \in \mathbb{Z}^n} E_{u}$ is a countable union of nowhere dense sets, and hence not equal to $\R$ by the Baire Category Theorem. Hence the subgroup of $(\mathbb{R},+)$ generated by $E$ is not equal to $\R$.\newline

\noindent Suppose (3) fails. Then there is $q = (q_1,\ldots,q_n) \in \mathbb{Q}^n$ and a $\mathbb{Q}$-linear $T: \R^n \to \R$ such that $T(E^n)$ is somewhere dense and for all $x = (x_1,\ldots,x_n) \in \mathbb{R}^n$
\[
 T(x) = q_1 x_1 + \ldots + q_n x_n.
\]
Then $T(E^n)$ is compact, because $E^n$ is compact. Thus $T(E^n)$ has interior.
Let $m \in \mathbb{Z}$ be such that $mq_i \in \mathbb{Z}$ for all $i \in \{1,\dots,n\}$, and set $mq := (mq_1,\ldots,mq_n)$.
Then
$$ E_{mq} = \{ mq_1 e_1 + \ldots + mq_n e_n : e_1,\ldots,e_n \in E\} = m T(E^n)$$
has interior.
So the subgroup of $(\mathbb{R},+)$ generated by $E$ has interior and therefore equals $\mathbb{R}$.
\end{proof}

\noindent When $E$ is closed, the subgroup of $(\mathbb{R},+)$ generated by $E$ is $F_\sigma$, and therefore Borel.
There are examples, first due to Erd\"os and Volkmann \cite{Erdos-Volkmann}, of proper Borel subgroups of $(\mathbb{R},+)$ with positive Hausdorff dimension.
If $G$ is a proper Borel subgroup of $(\mathbb{R},+)$ with positive Hausdorff dimension, then inner regularity of Hausdorff measure (see \cite[Corollary 4.5]{Mattila}) yields a compact subset of $G$ with positive Hausdorff dimension.
Such subsets necessarily have empty interior and have therefore topological dimension $0$. See Falconer \cite[Example 12.4]{Falconer-book} for specific examples of compact subsets of $\R$ that generate proper subgroups of ($\R,+)$ with positive Hausdorff dimension. By adding one of these compact set with positive Hausdorff dimension to $(\R,<,+)$, we obtain by Theorem \ref{thm:Eclass} a type A expansion that satisfies the assumptions of Theorem B (ii).

\bibliographystyle{amsplain}
\bibliography{Ref}

\providecommand{\bysame}{\leavevmode\hbox to3em{\hrulefill}\thinspace}
\providecommand{\MR}{\relax\ifhmode\unskip\space\fi MR }
\providecommand{\MRhref}[2]{%
  \href{http://www.ams.org/mathscinet-getitem?mr=#1}{#2}
}
\providecommand{\href}[2]{#2}
\begin{thebibliography}{10}

\bibitem{BH-Cantor}
William Balderrama and Philipp Hieronymi, \emph{Definability and decidability
  in expansions by generalized {C}antor sets}, Preprint arXiv:1701.08426
  (2017).

\bibitem{BRW}
Bernard Boigelot, St{\'e}phane Rassart, and Pierre Wolper, \emph{On the
  expressiveness of real and integer arithmetic automata (extended abstract)},
  Proceedings of the 25th International Colloquium on Automata, Languages and
  Programming (London, UK, UK), ICALP '98, Springer-Verlag, 1998, pp.~152--163.

\bibitem{Buchi}
J.~Richard B{\"u}chi, \emph{On a decision method in restricted second order
  arithmetic}, Logic, {M}ethodology and {P}hilosophy of {S}cience ({P}roc. 1960
  {I}nternat. {C}ongr .), Stanford Univ. Press, Stanford, Calif., 1962,
  pp.~1--11. \MR{0183636 (32 \#1116)}

\bibitem{BKPS}
Samuel~R. Buss, Alexander~S. Kechris, Anand Pillay, and Richard~A. Shore,
  \emph{The prospects for mathematical logic in the twenty-first century},
  Bull. Symbolic Logic \textbf{7} (2001), no.~2, 169--196. \MR{1839544}

\bibitem{cluckers-presburger}
Raf Cluckers, \emph{Presburger sets and {$p$}-minimal fields}, J. Symbolic
  Logic \textbf{68} (2003), no.~1, 153--162. \MR{1959315}

\bibitem{DMS1}
Alfred Dolich, Chris Miller, and Charles Steinhorn, \emph{Structures having
  o-minimal open core}, Trans. Amer. Math. Soc. \textbf{362} (2010), no.~3,
  1371--1411. \MR{2563733}

\bibitem{Engelking}
Ryszard Engelking, \emph{Dimension theory}, North-Holland Publishing Co.,
  Amsterdam-Oxford-New York; PWN---Polish Scientific Publishers, Warsaw, 1978,
  Translated from the Polish and revised by the author, North-Holland
  Mathematical Library, 19. \MR{0482697}

\bibitem{Erdos-Volkmann}
Paul Erd\H{o}s and Bodo Volkmann, \emph{Additive {G}ruppen mit vorgegebener
  {H}ausdorffscher {D}imension}, J. Reine Angew. Math. \textbf{221} (1966),
  203--208. \MR{0186782}

\bibitem{Falconer-book}
Kenneth Falconer, \emph{Fractal geometry}, John Wiley \& Sons, Ltd.,
  Chichester, 1990, Mathematical foundations and applications. \MR{1102677}

\bibitem{FHM}
Antongiulio Fornasiero, Philipp Hieronymi, and Chris Miller, \emph{A dichotomy
  for expansions of the real field}, Proc. Amer. Math. Soc. \textbf{141}
  (2013), no.~2, 697--698. \MR{2996974}

\bibitem{FHW-Compact}
Antongiulio Fornasiero, Philipp Hieronymi, and Erik Walsberg, \emph{How to
  avoid a compact set}, Adv. Math. \textbf{317} (2017), 758--785. \MR{3682683}

\bibitem{Fraser2020}
Jonathan~M. Fraser, \emph{Assouad dimension and fractal geometry}, Cambridge
  University Press, October 2020.

\bibitem{FY}
Jonathan~M. Fraser and Han Yu, \emph{New dimension spectra: finer information
  on scaling and homogeneity}, Adv. Math. \textbf{329} (2018), 273--328.
  \MR{3783415}

\bibitem{FKMS}
Harvey Friedman, Krzysztof Kurdyka, Chris Miller, and Patrick Speissegger,
  \emph{Expansions of the real field by open sets: definability versus
  interpretability}, J. Symbolic Logic \textbf{75} (2010), no.~4, 1311--1325.
  \MR{2767970 (2012a:03100)}

\bibitem{FM-sparse}
Harvey Friedman and Chris Miller, \emph{Expansions of o-minimal structures by
  sparse sets}, Fund. Math. \textbf{167} (2001), no.~1, 55--64. \MR{1816817}

\bibitem{GHM}
Ignacio Garc\'{\i}a, Kathryn Hare, and Franklin Mendivil, \emph{Assouad
  dimensions of complementary sets}, Proc. Roy. Soc. Edinburgh Sect. A
  \textbf{148} (2018), no.~3, 517--540. \MR{3826491}

\bibitem{Juha}
Juha Heinonen, \emph{Lectures on analysis on metric spaces}, Universitext,
  Springer-Verlag, New York, 2001. \MR{1800917}

\bibitem{H-TameCantor}
Philipp Hieronymi, \emph{A tame {C}antor set}, J. Eur. Math. Soc. (JEMS)
  \textbf{20} (2018), no.~9, 2063--2104. \MR{3836841}

\bibitem{HM}
Philipp Hieronymi and Chris Miller, \emph{Metric dimensions and tameness in
  expansions of the real field}, Trans. Amer. Math. Soc. \textbf{373} (2020),
  no.~2, 849--874. \MR{4068252}

\bibitem{HW-Monadic}
Philipp Hieronymi and Erik Walsberg, \emph{Interpreting the monadic second
  order theory of one successor in expansions of the real line}, Israel J.
  Math. \textbf{224} (2018), no.~1, 39--55. \MR{3799748}

\bibitem{HW-continuous}
\bysame, \emph{A tetrachotomy for expansions of the real ordered additive
  group}, Selecta Math. (N.S.) \textbf{27} (2021), no.~4, Paper No. 54, 36.
  \MR{4274698}

\bibitem{MR2965427}
Tomohiro Kawakami, Kota Takeuchi, Hiroshi Tanaka, and Akito Tsuboi,
  \emph{Locally o-minimal structures}, J. Math. Soc. Japan \textbf{64} (2012),
  no.~3, 783--797. \MR{2965427}

\bibitem{LasStein}
Michael~C. Laskowski and Charles Steinhorn, \emph{On o-minimal expansions of
  {A}rchimedean ordered groups}, J. Symbolic Logic \textbf{60} (1995), no.~3,
  817--831. \MR{1348995}

\bibitem{LGal}
Olivier Le~Gal, \emph{A generic condition implying o-minimality for restricted
  {$C^\infty$}-functions}, Ann. Fac. Sci. Toulouse Math. (6) \textbf{19}
  (2010), no.~3-4, 479--492. \MR{2790804}

\bibitem{Luukkainen}
Jouni Luukkainen, \emph{Assouad dimension: antifractal metrization, porous
  sets, and homogeneous measures}, J. Korean Math. Soc. \textbf{35} (1998),
  no.~1, 23--76. \MR{1608518 (99m:54023)}

\bibitem{MPP}
David Marker, Ya'acov Peterzil, and Anand Pillay, \emph{Additive reducts of
  real closed fields}, J. Symbolic Logic \textbf{57} (1992), no.~1, 109--117.
  \MR{1150928}

\bibitem{Mattila}
Pertti Mattila, \emph{Geometry of sets and measures in {E}uclidean spaces},
  Cambridge Studies in Advanced Mathematics, vol.~44, Cambridge University
  Press, Cambridge, 1995, Fractals and rectifiability. \MR{1333890}

\bibitem{ivp}
Chris Miller, \emph{Expansions of dense linear orders with the intermediate
  value property}, J. Symbolic Logic \textbf{66} (2001), no.~4, 1783--1790.
  \MR{1877021 (2003j:03044)}

\bibitem{RSW}
J.-P. Rolin, P.~Speissegger, and A.~J. Wilkie, \emph{Quasianalytic
  {D}enjoy-{C}arleman classes and o-minimality}, J. Amer. Math. Soc.
  \textbf{16} (2003), no.~4, 751--777. \MR{1992825}

\bibitem{Simon-Book}
Pierre Simon, \emph{A guide to nip theories}, Lecture Notes in Logic, vol.~44,
  Cambridge University Press, 2015.

\bibitem{TV-local}
Carlo Toffalori and Kathryn Vozoris, \emph{Notes on local o-minimality}, MLQ
  Math. Log. Q. \textbf{55} (2009), no.~6, 617--632. \MR{2582162}

\bibitem{Lou}
Lou van~den Dries, \emph{Tame topology and o-minimal structures}, London
  Mathematical Society Lecture Note Series, vol. 248, Cambridge University
  Press, Cambridge, 1998. \MR{1633348}

\bibitem{weis}
Volker Weispfenning, \emph{Mixed real-integer linear quantifier elimination},
  Proceedings of the 1999 {I}nternational {S}ymposium on {S}ymbolic and
  {A}lgebraic {C}omputation ({V}ancouver, {BC}), ACM, New York, 1999,
  pp.~129--136 (electronic). \MR{1802076 (2002b:03071)}

\end{thebibliography}

\end{document}